\documentclass[11pt]{article}

\usepackage{amsmath,amsthm,amssymb}
\usepackage[usenames,dvipsnames]{xcolor}
\usepackage{enumerate}
\usepackage{graphicx}
\usepackage{cite}
\usepackage{comment}
\usepackage[margin=1in]{geometry}

\usepackage[pdftitle={Closure of LQG},
  pdfauthor={Andres Contreras-Hip and Ewain Gwynne},
colorlinks=true,linkcolor=NavyBlue,urlcolor=RoyalBlue,citecolor=PineGreen,bookmarks=true,bookmarksopen=true,bookmarksopenlevel=2,unicode=true,linktocpage]{hyperref}

\setcounter{tocdepth}{2}

%%%  Definitions for theorem-like environments.
\theoremstyle{plain}
\newtheorem{thm}{Theorem}[section]
\newtheorem{theorem}[thm]{Theorem}

\newtheorem{lemma}[thm]{Lemma}

\def\@rst #1 #2other{#1}
\newcommand\MR[1]{\relax\ifhmode\unskip\spacefactor3000 \space\fi
  \MRhref{\expandafter\@rst #1 other}{#1}}
\newcommand{\MRhref}[2]{\href{http://www.ams.org/mathscinet-getitem?mr=#1}{MR#2}}

\theoremstyle{definition}
\newtheorem{defn}[thm]{Definition}

%Equation numbers by section
\numberwithin{equation}{section} 

\newcommand{\dsb}{\begin{adjustwidth}{2.5em}{0pt}
\begin{footnotesize}}
\newcommand{\dse}{\end{footnotesize}
\end{adjustwidth}}

\newcommand{\ssb}{\begin{adjustwidth}{2.5em}{0pt}}
\newcommand{\sse}{\end{adjustwidth}}

\newcommand{\aryb}{\begin{eqnarray*}}
\newcommand{\arye}{\end{eqnarray*}}
\def\alb#1\ale{\begin{align*}#1\end{align*}}
\def\allb#1\alle{\begin{align}#1\end{align}}
\newcommand{\eqb}{\begin{equation}}
\newcommand{\eqe}{\end{equation}}
\newcommand{\eqbn}{\begin{equation*}}
\newcommand{\eqen}{\end{equation*}}

\newcommand\p{\partial}
\newcommand\e{\varepsilon}

\newcommand\R{\mathbb{R}}
\newcommand\Z{\mathbb{Z}}
\newcommand\N{\mathbb{N}}

\newcommand\norm[1]{\lVert#1\rVert}

\newcommand\vphi{\varphi}

%Remove spacing before \left and \right
\let\originalleft\left
\let\originalright\right
\renewcommand{\left}{\mathopen{}\mathclose\bgroup\originalleft}
\renewcommand{\right}{\aftergroup\egroup\originalright}

\title{Approximation of length metrics by conformally flat Riemannian metrics}
 \date{ }
 \author{ 
\begin{tabular}{c} Andres A. Contreras Hip and Ewain Gwynne\\ \small University of Chicago \end{tabular}  
}

\begin{document}

\maketitle

\begin{abstract}
We present a proof of the folklore result that any length metric on $\mathbb R^d$ can be approximated by conformally flat Riemannian distance functions in the uniform distance. This result is used to study Liouville quantum gravity in another paper by the same authors.  
\end{abstract}

\tableofcontents

\section{Introduction}\label{appendix}
In this note we show that any length metric space homeomorphic to $\mathbb{R}^d$ can be approximated by conformally flat Riemannian metrics on $\mathbb{R}^d.$ More precisely, let $D_0$ denote the Euclidean metric on $\R^d,$ and define $e^f \cdot D_0$ to be the distance function associated with the Riemannian metric $e^f (dx_1^2+ \ldots + dx_d^2).$ Before we state the theorem, we will need the following definition.
\begin{defn} \label{def-length}
Let $(X,\mathfrak d)$ be a metric space. 
For a path $P : [a,b]\to X$, we define the \emph{$\mathfrak d$-length} of $P$ by
\eqbn
\ell_{\mathfrak{d}}(P) := \sup_{T} \sum_{i=1}^{\# T} \mathfrak d(P(t_i) , P(t_{i-1})) 
\eqen
where the supremum is over all partitions $T : a= t_0 < \dots < t_{\# T} = b$ of $[a,b]$.  
We say that $\mathfrak d$ is a \textbf{length metric} if for each $x,y\in X$, the distance $\mathfrak d(x,y)$ is equal to the infimum of the $\mathfrak d$-lengths of the $\mathfrak d$-continuous paths from $x$ to $y$. A path from $x$ to $y$ of minimal $\mathfrak d$-length is called a \textbf{geodesic}.
\end{defn}

We say that $(X,\mathfrak d)$ is \textbf{boundedly compact} is each closed bounded subset of $X$ is $\mathfrak d$-compact. 
It is shown in~\cite[Corollary 2.5.20]{bbi-metric-geometry} that if $\mathfrak d$ is a length metric and $(X,\mathfrak d)$ is boundedly compact, then there is a $\mathfrak d$-geodesic between any two points of $X$.

\begin{theorem}\label{app}
Let $\bar{D}$ be a boundedly compact length metric on $\R^d$ which induces the same topology as the Euclidean metric $D_0.$ Let $\e>0$ and $R>0.$ Then there exists a bounded continuous function $f:\R^d\to \R$ such that
\[
\sup_{x,y \in [-R,R]^d}\vert \bar{D}(x,y)-e^f\cdot D_0(x,y) \vert \leq \e.
\]
\end{theorem}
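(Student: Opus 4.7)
The plan is a two-step approximation: first I would discretize $\bar{D}$ as a graph metric on a fine net, and then realize that graph metric as the distance function of a conformally flat Riemannian metric via a tube-network construction.

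\textbf{Stage 1 (graph approximation).} Fix $\eps>0$. By the bounded compactness of $(\R^d,\bar{D})$ and the topological equivalence of $\bar{D}$ and $D_0$, the $\bar{D}$-closed neighborhood $K := \{x \in \R^d : \bar{D}(x, [-R,R]^d) \leq \mathrm{diam}_{\bar{D}}([-R,R]^d)\}$ is Euclidean-compact and contains every $\bar{D}$-geodesic between points of $[-R,R]^d$. Let $\omega$ be a modulus of continuity for $\bar{D}$ with respect to $D_0$ on $K$. Choose $r>0$ small, then $\delta>0$ so small that $\omega(\delta\sqrt d) \leq r/4$ and $4\,\mathrm{diam}_{\bar{D}}(K)\,\omega(\delta\sqrt d)/r \leq \eps/3$. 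Let $V$ consist of the grid vertices $\delta\Z^d$ lying in a slight enlargement of $K$, and form the graph $G=(V,E)$ with $E = \{(v,w)\in V^2 : 0<\bar{D}(v,w)\leq r\}$ and edge weight $\bar{D}(v,w)$. I would show that the induced graph distance $d_G$ satisfies $\bar{D}(v,w) \leq d_G(v,w) \leq \bar{D}(v,w)+\eps/3$ for all $v,w \in V\cap[-R,R]^d$. The lower bound is the triangle inequality for $\bar{D}$. For the upper bound I would use the existence of a $\bar{D}$-geodesic $\gamma$ from $v$ to $w$ guaranteed by~\cite[Corollary 2.5.20]{bbi-metric-geometry}, sample it at $\bar{D}$-arclength spacing $r/2$, and snap each sample to a nearest grid vertex. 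The modulus bound ensures consecutive snapped vertices lie within $\bar{D}$-distance $r$ (hence form an edge of $G$) and that the summed weights exceed $\bar{D}(v,w)$ by at most $\eps/3$.

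\textbf{Stage 2 (conformal realization).} I would build $f$ by attaching to each edge $e = (v,w) \in E$ a thin tube $T_e$ of Euclidean radius $\eta \ll \delta$ around the segment $[v, w]$, setting $f \equiv c_e := 2\log(\bar{D}(v,w)/|v-w|_0)$ on the portion of $T_e$ outside small hub balls at the endpoints, and setting $f$ to a sufficiently large constant $M$ outside the union of all tubes and hubs. Continuity is achieved via bump-function interpolation at tube boundaries. By the choice of $c_e$, the Riemannian length of the straight segment $[v, w]$ inside $T_e$ equals $\bar{D}(v,w)$, so concatenating tube traversals along a graph geodesic in $G$ yields a Riemannian path from $v$ to $w$ of length at most $d_G(v,w) + \eps/3$, with error coming from smoothing and hub traversals (controlled by $\eta$). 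For the reverse inequality, any Riemannian path of length comparable to $\bar{D}(v,w)$ cannot spend more than negligible Euclidean length outside the tube network (otherwise the factor $e^{M/2}$ inflates its length beyond $\bar{D}(v,w)$), so it essentially lives in the tube network and thus corresponds to a graph path with length at least $d_G(v,w)-\eps/3$. A final continuity argument transfers the comparison $|e^f \cdot D_0 - \bar{D}| \leq \eps$ from pairs in $V\cap [-R,R]^d$ to all of $[-R,R]^d$.

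\textbf{Main obstacle.} The critical step is the graph approximation in Stage 1. A naive grid with only axis-parallel edges (Euclidean-defined adjacency) would over-estimate anisotropic $\bar{D}$-distances by a factor as large as $\sqrt d$—as one sees already when $\bar D$ is the $L^\infty$ norm—even in the limit $\delta \to 0$. Defining adjacency intrinsically via $\bar{D}(v,w) \leq r$ resolves this because the graph then carries edges in all directions traversed by $\bar{D}$-geodesics, and the approximation error is controlled purely by the modulus of continuity of $\bar{D}$ with respect to $D_0$, without any need for a Lipschitz-type comparison. The subsequent tube construction in Stage 2 requires some care (handling tube overlaps near vertices and at segment crossings, and choosing $M$ large enough that exterior detours are prohibited while exterior points in $[-R,R]^d$ remain close enough to the network), but once the parameter hierarchy $r \ll 1$, $\delta \ll r$, $\eta \ll \delta$ is fixed and $M$ is tuned to the prescribed values $c_e$, the verification is routine.
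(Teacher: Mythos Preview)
Your two-stage plan (graph approximation, then tube network) matches the paper's architecture, and your Stage~1 is essentially correct. The gap is in Stage~2, and it is precisely the point you flag as ``routine'': the Euclidean segments $[v,w]$ you use as tube cores can cross in their interiors, and those crossings break the lower bound $e^f\cdot D_0\geq \bar D-\eps$.

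Here is why. On the tube around $e=[v,w]$ you set $e^{f/2}$ equal to the constant $\bar D(v,w)/|v-w|$, i.e.\ the \emph{average} $\bar D$-cost per unit Euclidean length along that segment. If the straight segment passes through a region which is $\bar D$-expensive (which it certainly can, since the $\bar D$-geodesic from $v$ to $w$ need not be the Euclidean segment), then near that region your $e^{f/2}$ is much smaller than the true local $\bar D$-cost. A second edge $e'=[v',w']$ crossing $e$ at a point $p$ in that expensive region lets an $e^f\cdot D_0$-geodesic enter tube $e$, travel to $p$, and switch to tube $e'$ --- paying only the averaged rate on each partial segment. Concretely, take $\bar D=e^{g}\cdot D_0$ with $g$ large on $\{y>0\}$ and $g=0$ on $\{y\le 0\}$; the vertical edge $[(0,-s),(0,s)]$ and the horizontal edge $[(-s,0),(s,0)]$ cross at the origin, and the tube path from $(0,s)$ via $0$ to $(-s,0)$ is strictly shorter than $\bar D((0,s),(-s,0))$. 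Your lower-bound argument (``a path in the tube network corresponds to a graph path'') fails exactly here: the tube network has strictly more connectivity than $G$, and the extra connections do not respect $\bar D$. Adding the crossing points as vertices does not help, because the induced sub-edge weights (fractions of $\bar D(v,w)$) need not dominate the actual $\bar D$-distances along those sub-segments.

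The paper's construction is designed to avoid this. It takes as initial edges the $\bar D$-\emph{geodesics} between net points (so any subarc of an edge has $\bar D$-length equal to the $\bar D$-distance between its endpoints), then performs a careful surgery so that distinct geodesics meet only in a single point or a common subarc, adds vertices at all merge points, and only then replaces each resulting short geodesic arc by a straight segment, with parameters chosen so the segments are disjoint except at shared endpoints. This guarantees that every path confined to the tube network projects to a genuine edge-path in $G$, which is what makes the lower bound go through.

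A second point you pass over: a single large constant $M$ outside the tubes cannot simultaneously trap geodesics inside the network and keep off-network points in $[-R,R]^d$ within $\eps$ of the network in the $e^f\cdot D_0$-metric. The paper resolves this tension with a two-scale barrier: a large value $C_0$ on a thin shell of width $\bar\eta$ around the tube boundary (so any escaping path pays at least $e^{C_0}\bar\eta$, which is made larger than the in-network alternative), and a smaller value $C_1<C_0$ on the bulk exterior (tuned so that the $e^f\cdot D_0$-distance from any exterior point to the nearest tube is $O(\eps)$). This is not difficult once seen, but it is not the single-parameter tuning you describe.
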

\noindent This result is stated in dimension $2$ without proof in \cite{bbi-metric-geometry} right after Exercise 7.1.3. 

Recall that a Riemannian metric $g$ on $\mathbb R^d$ is \emph{conformally flat} if $g = e^f (dx_1^2 + \dots + dx_d^2)$ for some function $f : \mathbb R^d \to \mathbb R$.
In dimension $2,$ every Riemannian metric is locally conformally flat. In higher dimensions this is no longer true: conformal flatness is equivalent to the vanishing of the Weyl tensor in dimension $3$ or the Cotton tensor for $d \geq 4.$ Nevertheless, we show that conformally flat metrics suffice to approximate any length metric, including ones which arise from a Riemannian metric which is not conformally flat and ones which do not arise from a Riemannian metric. 

Our result is used in~\cite{cg-support-thm} to show that any boundedly compact length metric on $\mathbb R^d$ can be approximated with positive probability by the random metrics arising from Liouville quantum gravity and its higher-dimensional analogs.

To prove Theorem~\ref{app}, we proceed as follows. We approximate our length metric by a weighted graph embedded in $\mathbb R^d$ by considering geodesics between points in a fine grid (see \eqref{dGdclose}). These weights are chosen in such a way that the graph distance metric $d_G$ coincides with $\bar{D}$ for pairs of vertices. After this, we define an appropriate function $f$ which is very large far away from the graph's edges, and then we define $f$ according to the weights near each edge (see \eqref{fdef}). These weights are chosen in such a way that the metrics $e^f\cdot D_0$ and $d_G$ are close for vertices. Additionally, $f$ is chosen to be large enough away from the edges of $G$ in such a manner that $e^f\cdot D_0$ geodesics are forced to stay near $G$'s edges, but at the same time not too large, so that points away from the vertices of $G$ are still at a small distance from the edges (see Lemma \ref{triptohighway}). After this, we show that the metrics $d_G$ and $e^f\cdot D_0$ are close for adjacent vertices (Lemmas \ref{upperedge} and \ref{loweredge}) and later show that $e^f\cdot D_0$ distance minimizing paths between points in a neighborhood of the graph $G$ remain in a slightly thicker neighborhood of $G$ (see Lemma \ref{trapped}). Combining these lemmas together with with the fact that $d_G$ approximates $\bar{D}$ we conclude.

\section*{Acknowledgements}

We thank Dmitri Burago for helpful discussions. E.G.\ was partially supported by a Clay research fellowship and by National Science Foundation grant DMS-2245832. 

\section{Proof of Theorem \ref{app}}

Let $\e>0.$ We will first construct a graph embedded into $\R^d$ with weights on the edges, with the property that the weighted graph distance between any two vertices is close to their $\bar{D}$-distance. The edges of the graph will be approximations of $\bar{D}$-geodesics. After this, we choose an appropriate function $f$ such that $e^f \cdot D_0$ approximates this graph metric. Finally, we put these together and bound distances between any pair of points in $\R^d.$
 
We start by constructing the graph approximation of $\bar{D}$.

\subsection{Construction of an approximating graph}

Let $\bar{n}\in\N,$ and subdivide $[-R,R]^d$ into $\bar{n}^d$ cubes $S_{i_1i_2\cdots i_d}$ for $i_1,i_2,\ldots ,i_d\in\{1,\dots,\bar n\}$ of side length $\frac{R}{\bar{n}}.$ Again, since $\bar{D}$ and $D_0$ induce the same topology, we can take $\bar{n}$ large enough so that
\begin{equation}\label{smalldiam}
\sup_{i_1,i_2,\cdots ,i_d} \mathrm{Diam}_d (S_{i_1,i_2,\cdots ,i_d}+B_{R/(2\bar{n})}(0)) \leq \frac{\e}{64}.
\end{equation}
Take a large integer $\bar{m},$ and further subdivide each $S_{i_1\cdots i_d}$ into $\bar{m}^d$ cubes of side $\frac{R}{\bar{n}\bar{m}}.$ We label these cubes as $\{\bar S_{j_1 \cdots j_d}^{i_1\cdots i_d} : i_1,\ldots ,i_d\in \{1,\dots,\bar n\}, j_1, \ldots , j_d \in \{1,\dots,\bar m\}\}$ in such a way that
\[
\bigcup_{j_1, \ldots , j_d} \bar{S}_{j_1 \cdots j_d}^{i_1\cdots i_d} = S_{i_1\cdots i_d}.
\]
Let $G_1$ denote the graph consisting of the vertices $\frac{R}{\bar n \bar m} \Z^d \cap [-R,R]^d$ and with edges being $\bar{D}$-length minimizing paths between all pairs of vertices. Recall from just after Definition~\ref{def-length} that such paths exist since $\bar{D}$ is a boundedly compact length metric. The geodesics in this set can intersect each other, possibly infinitely many times.

Our next step is to replace these geodesics by other $\bar{D}$-length minimizing geodesics such that any pair intersects at either the endpoints, or at a single curve segment. More precisely:
\begin{lemma}
Suppose that $\mathcal{S}$ is a finite set of $\bar{D}$-geodesics between pairs of points in $\R^d.$ Then there exists another set $\bar{\mathcal{S}}$ of geodesics between the same pairs of points such that each the intersection of each pair of geodesics in $\bar{\mathcal{S}}$ is either a common segment or a single point.
\end{lemma}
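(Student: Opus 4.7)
The plan is to apply a single \emph{splicing} operation to each pair of geodesics with bad intersection, iterating until every pair is nice. The key observation is that if two $\bar D$-geodesics $\gamma, \delta$ pass through two common points $p$ and $q$, then the subarcs of $\gamma$ and of $\delta$ between $p$ and $q$ are both $\bar D$-geodesics from $p$ to $q$, and hence have the same $\bar D$-length. One subarc may therefore be swapped for the other without changing the total length of either path.

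Concretely, for the single-pair splicing of $\gamma$ and $\delta$, parametrize both by $\bar D$-arclength as $\gamma\colon [0,L_\gamma] \to \mathbb R^d$ and $\delta\colon [0,L_\delta]\to \mathbb R^d$; these parametrizations are injective because length-minimizing paths cannot backtrack. Let $A = \gamma^{-1}(\delta([0,L_\delta]))$, a closed subset of $[0,L_\gamma]$. If $A$ is a single point or a single interval, the pair is already nice; otherwise set $a = \min A$ and $b = \max A$, and let $s, t \in [0,L_\delta]$ satisfy $\delta(s) = \gamma(a)$ and $\delta(t) = \gamma(b)$, reversing the parametrization of $\delta$ if necessary so that $s\le t$. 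Three applications of the triangle inequality along $\delta$ show that $\delta|_{[s,t]}$ is itself a geodesic of length $t-s = b-a$. Define $\delta'$ to agree with $\delta$ on $[0,s]\cup[t,L_\delta]$ and with a reparametrization of $\gamma|_{[a,b]}$ on $[s,t]$; then $\delta'$ is a $\bar D$-geodesic of the same length as $\delta$ between the same endpoints. By the extremality of $a$ and $b$, the set $\gamma([0,a)\cup(b,L_\gamma])$ does not meet $\delta$, hence does not meet $\delta'$, so $\gamma \cap \delta' = \gamma([a,b])$ is a common subarc of $\gamma$ and $\delta'$ (or a single point when $a=b$).

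To treat all of $\mathcal{S}$, I would iterate the above: repeatedly pick a pair with bad intersection and splice. The principal obstacle is termination, since splicing $\gamma$ and $\delta$ can break the intersection of $\delta'$ with some third geodesic $\eta\in\mathcal S$ into several components. To overcome this I would exploit the monotonicity of the union $U = \bigcup_{\zeta\in\mathcal S}\zeta \subseteq \mathbb R^d$: because the removed portion of $\delta$ is replaced by a portion of $\gamma$ already lying in $U$, the set $U$ can only shrink under a splicing. Pairing this with an appropriate ordering (for instance, always splicing a pair whose removed subarc strictly shrinks $U$, or tracking an integer-valued combinatorial complexity such as the number of maximal arcs in the decomposition of $U$ at its self-intersection points) one expects termination in finitely many steps. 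Proving this rigorously is the main technical difficulty; if a direct combinatorial argument resists, a fallback is to pass to a limit along a subsequence of spliced configurations using compactness of the space of $\bar D$-geodesics of bounded length, and argue that any limit configuration must have all pairwise intersections of the required form, since otherwise one further splicing would strictly improve the monotone functional.
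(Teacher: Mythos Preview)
Your single-pair splicing operation is correct and is exactly the move the paper uses: locate the first and last intersection parameters along one geodesic and replace the portion of the other geodesic between the corresponding points by the subarc of the first. The gap is in how you extend this to the whole family $\mathcal S$.

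You propose to iterate splices on arbitrary bad pairs and then argue termination via monotonicity of the union $U=\bigcup_{\zeta\in\mathcal S}\zeta$ or via a compactness limit. Neither of these closes the argument as stated. Monotonicity of $U$ is genuine, but $U$ need not \emph{strictly} decrease: if the subarc of $\delta$ you discard is already covered by some third geodesic $\eta$, the union is unchanged, so you cannot rule out an infinite sequence of splices. The compactness fallback is also problematic: the condition ``each pairwise intersection is a single point or a single subarc'' is not closed under uniform limits of geodesics, so a limiting configuration need not be nice, and there is no obvious strictly-improving continuous functional to contradict.

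The paper sidesteps termination entirely by processing the geodesics in a fixed order $P_1,\dots,P_n$ and, at step $m+1$, modifying \emph{only} $P_{m+1}$. Concretely, one records the successive first-hit and last-hit times of $P_{m+1}$ against the already-processed $P_1,\dots,P_m$, obtaining finitely many parameter intervals $[t_i,t_i']$ (one for each $P_{j_i}$ encountered), and on each such interval replaces $P_{m+1}$ by the matching subarc of $P_{j_i}$. Because exactly one geodesic is modified per step and there are $n$ geodesics, the procedure halts after $n$ steps; no termination argument is needed. The content of your splice is the same as the paper's, but organizing it as a one-pass induction rather than an unordered iteration is the missing idea.
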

\begin{proof}
We first consider a single pair of geodesics, and note that we can replace these two geodesics so that they intersect at either a single point or in an interval. Indeed, suppose $\gamma_1,\gamma_2:[0,1]\to \R^d$ are geodesics between $x_1,x_2$ and $y_1,y_2$ respectively and that they intersect at at least two points. Let 
	\[
	t_1:=\inf\{\sigma \in [0,1]: \gamma_1(\sigma) \in \gamma_2([0,1])\} ,% \quad s_1:=\inf\{\sigma \in [0,1]: \gamma_2(\sigma) \in \gamma_1([0,1])\} 
	\]
	\[
	t_2:=\sup\{\sigma \in [0,1]: \gamma_1(\sigma) \in \gamma_2([0,1])\} ,% \quad s_2:=\sup\{\sigma \in [0,1]: \gamma_2(\sigma) \in \gamma_1([0,1])\}.
	\]
and let $s_1,s_2$ such that $p_1:=\gamma_1(t_1)=\gamma_2(s_1),$ and $p_2:=\gamma_1(t_2)=\gamma_2(s_2).$ Recall that since $\gamma_1,\gamma_2$ intersect at least twice, $p_1\neq p_2.$ Since $\gamma_2$ is a $\bar{D}$-length minimizing geodesic, it is also length minimizing when restricting to an interval. Therefore $\gamma_2\vert_{[s_1,s_2]}$ is a length minimizing path between $p_1,p_2.$ Thus we can replace $\gamma_1$ by $\tilde{\gamma}_1=\gamma_1\vert_{[0,t_1]} \ast \gamma_2\vert_{[s_1,s_2]}\ast\gamma_1\vert_{[t_2,1]},$ where $\ast$ denotes concatenation. This procedure lets us replace two geodesics with multiple intersection points with geodesics whose intersection is a single interval (see Figures \ref{georeppic1} and \ref{georeppic2}). Suppose we number the geodesics in $\mathcal{S}$ arbitrarily, so that the geodesics are the set $\{P_k\}_{k=1}^n$ for some large $n.$ We will inductively apply this procedure to to get rid of all intersections that are not at endpoints or single segments. First we apply the procedure to $P_1$ and $P_2$ to replace them with geodesics intersecting at their endpoints. Now suppose that we have replaced $\{P_k\}_{k=1}^m$ with geodesics intersecting only at endpoints, where $m<n.$ Let
\[
t_1:= \inf\{t:P_{m+1}(t)\in \cup_{j=1}^{m+1}P_j([0,1])\}.
\]
Now let $j_1$ be such that $P(t_1)\in P_{j_1}.$ Then we define
\[
t_1':=\sup\{t:P_{m+1}(t)\in P_{j_1}([0,1])\}.
\]
Suppose we have defined $t_1, \ldots t_i,$ $t_1',\ldots , t_i',$ and $j_1,\ldots j_i.$ Then define
\[
t_{i+1}:= \inf\{t\geq t_i':P_{m+1}(t)\in \cup_{j\notin\{j_1, \ldots , j_i\}}P_j([0,1])\},
\]
let $j_{i+1}$ such that $P(t_{i+1})\in P_{j_{i+1}}([0,1]),$ and finally let
\[
t_{i+1}':=\sup\{t:P_{m+1}(t)\in P_{j_{i+1}}([0,1])\}.
\]

 Note that the intervals $[t_i,t_i']$ are disjoint. Now applying the above procedure we can replace $P_{m+1}$ in the intervals $[t_i,t_i']$ by the corresponding segments on $P_{i+1},$ thus removing multiple intersections, leaving us with only intersection at the endpoints. This completes the proof.
\end{proof}

%%%%%%%%%%%%%%%%%%%%%%%%%%%%%%%%%%%%%%%%%%%%
\begin{figure}
\centering
\begin{minipage}{0.4\textwidth}
  \centering
  \includegraphics[width=0.9\textwidth]{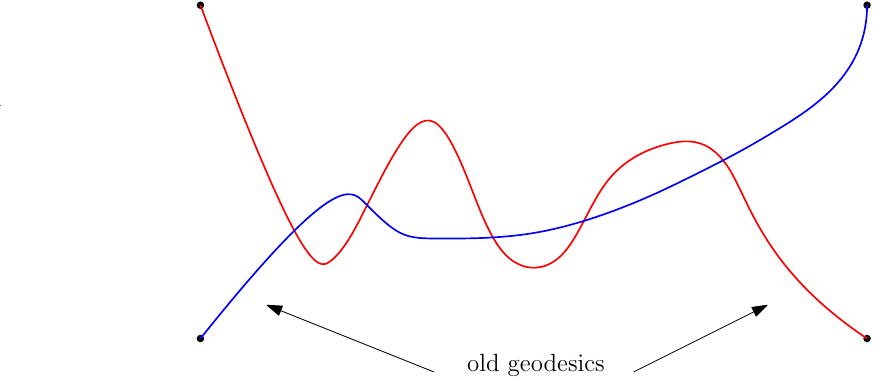}
  \caption{\label{georeppic1} old geodesics (red and blue paths)}
\end{minipage}
\begin{minipage}{0.4\textwidth}
  \centering
  \includegraphics[width=0.9\linewidth]{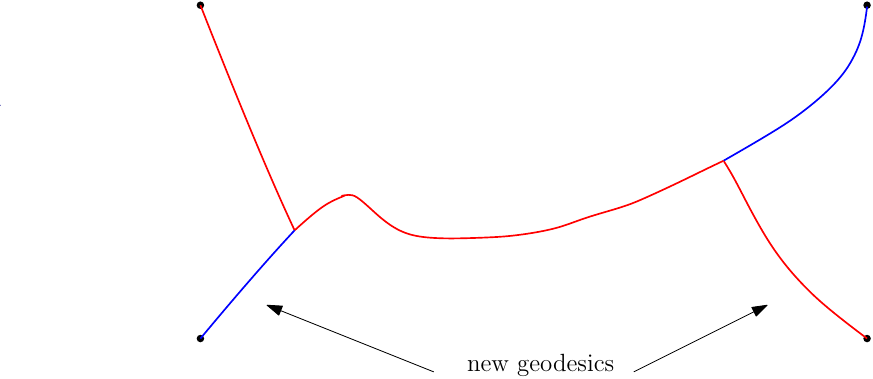}
  \caption{\label{georeppic2} New geodesics, cut from the red and blue geodesics}
\end{minipage}
\end{figure}

%%%%%%%%%%%%%%%%%%%%%%%%%%%%%%%%%%%%%%%%%%%%
By applying the above lemma to the geodesics of $G_1$ we end up with a graph $G_2$ whose vertices are points in $\frac{R}{\bar n \bar m} \Z^d \cap [-R,R]^d$ and whose edges are geodesics between the vertices and intersect either at points or single curve segments. %We also define the faces of $G_2$ to be the connected components of the complement of the union of all edges. To make things easier, we make the faces be closed sets.

Now let $\bar{\eta},\zeta>0$ be small. Note that for large enough $\bar{n},\bar{m},$ any edge in the graph $G_2$ between two vertices in $\bar{S}_{j_1\cdots j_d}^{i_1\cdots i_d}$ is contained in $S_{i_1\cdots i_d}+B_{\zeta}(0).$ This comes from the uniform continuity of $\bar{D}$ with respect to $D_0.$
\begin{comment}
In particular, by \eqref{smalldiam}, we have that for any face $F$ we have
\textcolor{red}{
\begin{equation}\label{diamface}
\mathrm{Diam}_{\bar{D}}(F) \leq \frac{\e}{2}.
\end{equation}
}
\end{comment}
Now at the points where any pair of geodesics merge, add a vertex to the graph, and divide the edges accordingly to obtain a new graph $G_3.$ %Note that $G_3$ is such that all faces are of $\bar{D}$-diameter at most $\e.$

We will now replace each geodesic with a piecewise linear approximation. To ensure that these piecewise linear approximations do not intersect, we will use a larger step size near each vertex, and a smaller step size on each edge far away from vertices. Let $\tau>0$ be small, and let $K$ be a large integer (these are free variables that will be chosen later). First we subdivide each geodesic edge $e$ of $G_3$ into three segments: two at the endpoints of $\bar{D}$-length $\tau,$ and a middle segment of length $\ell_d(e)-2\tau.$ We further subdivide the middle segment into segments of length $\frac{\ell_d(e)-2\tau}{K}$ (see Figure \ref{segpic}). Let the points where these line segments meet be new vertices. We have now obtained a new graph $G_4.$ If $K$ is chosen large enough so that
\[
\frac{1}{K} < \inf_{\substack{e_1,e_2\\ \mathrm{edges}}}\bar{D}(e_1,e_2)
\]
and $\tau \leq \frac{1}{10}\min_{\substack{v_1,v_2\\ \mathrm{edges}}} d(v_1,v_2),$ it is easy to see that the new edges do not intersect.
Let
\[
E=\cup_{e\in \mathcal{E}G} e
\]
where $\mathcal{E} G$ denotes the edge set of $G,$ and for any $\eta>0$
\[
E_\eta:=\{z\in \R^d: \bar{D}(z,E)\leq \eta\}.
\]

%%%%%%%%%%%%%%%%%%%%%%%%%%%%%%%%%%%%%%%%%%%%

\begin{figure}
\begin{center}
\includegraphics[width=0.5\textwidth]{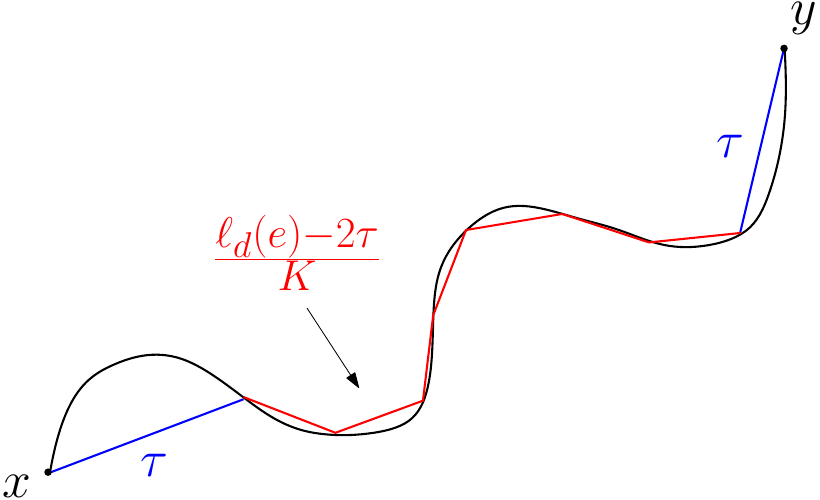}
\caption{\label{segpic} Approximation by line segments
}
\end{center}
\end{figure}
%%%%%%%%%%%%%%%%%%%%%%%%%%%%%%%%%%%%%%%%%%%%

We will now assign weights to all edges of this graph. If $e=(x,y)$ denotes an edge on the graph $G_4$ with $x,y$ as its endpoints, then let $w_e:=\bar{D}(x,y)$ be the corresponding weights and define $d_G$ to be the weighted graph distance on $G_4.$ Let $G$ denote this weighted graph we have constructed, and let $\mathcal{V}G$ denote the set of vertices of $G.$ Let $\delta > 0.$ Then taking $\bar{n},\bar{m}$ large enough and recalling that $\bar{D}$ is uniformly continuous with respect to $D_0$ we see that $G$ is a weighted graph whose vertices form a $\delta$-net with respect to both the $\bar{D}$ and $D_0$ metrics, whose edges are straight line segments, and whose edges only intersect at endpoints.

%%%%%%%%%%%%%%%%%%%%%%%%%%%%%%%%%%%%%%%%%%%%

\begin{figure}
\centering
\begin{minipage}{0.4\textwidth}
  \centering
  \includegraphics[width=0.9\textwidth]{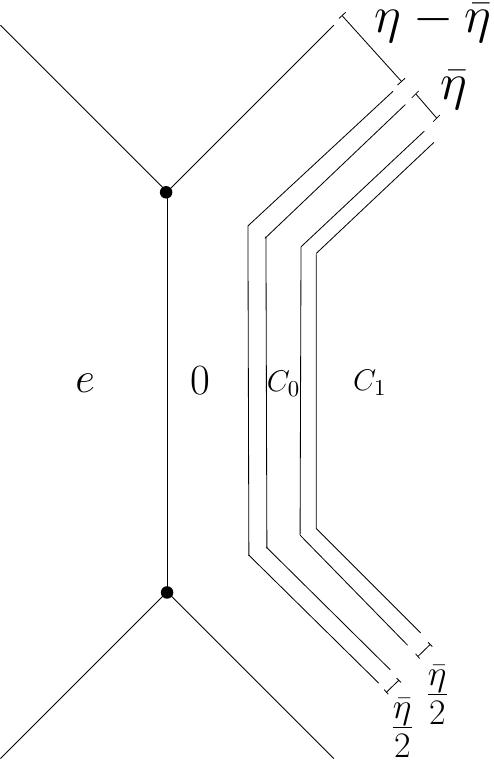}
\end{minipage}
\begin{minipage}{0.4\textwidth}
  \centering
  \includegraphics[width=0.9\linewidth]{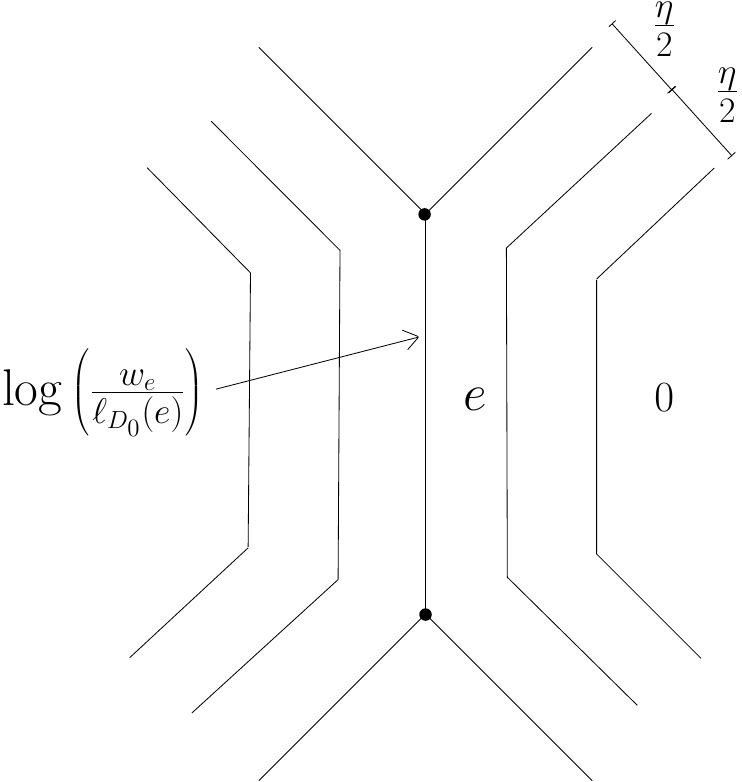}
\end{minipage}
\caption{\label{fFetapic} {\bf Left:} Values of $F_{\eta,\bar{\eta}}^{\mathrm{ext}}$. {\bf Right:} Values of $f_{e,\eta}$
}
\end{figure}
%%%%%%%%%%%%%%%%%%%%%%%%%%%%%%%%%%%%%%%%%%%%

Note that 
\begin{equation}\label{dGdclose}
\sup_{\substack{x,y\in G\\
\mathrm{vertices}}} \vert \bar{D}(x,y)-d_G(x,y)\vert \leq\frac{\e}{4}
\end{equation}
for large enough $n,m.$ Indeed, let $x_1, \ldots ,x_n$ be such that the path formed by the edges $(x_i,x_{i+1})$ is the path of least $d_G$ length between $x,y.$ Then since
\[
\bar{D}(x_i,x_{i+1}) = d_G(x_i,x_{i+1})
\]
we see that 
\[
\bar{D}(x,y)\leq d_G(x,y).
\]
For the reverse inequality one uses the fact that the set of vertices of $G$ is an $\e$-net together with the fact that the edges of $G_3$ are $\bar{D}$-geodesics.

Let $\eta>0,$ and define $\mathcal{F}_\eta$ by
\[
\mathcal{F}_\eta = \{z \in \R^d: D_0(z,E)=\eta\}
\]
Also let $C_0,C_1>0$ be large constants to be chosen later. Let $\bar{\eta} \in (0,\eta/100)$ be a small constant. Then define a smooth function $F_{\eta,\bar{\eta}}^{\mathrm{ext}}$ such that 
\[
F_{\eta,\bar{\eta}}^{\mathrm{ext}}(z)=
\left\{
\begin{matrix}
C_0, & \mbox{ if } D_0(z,\mathcal{F}_\eta) \leq \frac{\bar{\eta}}{2}\\
C_1, & \mbox{ if } z \in \R^d \setminus E_{\eta+\bar{\eta}}\\
0, & \mbox{ if } z \in E_{\eta-\bar{\eta}}
\end{matrix}
\right.
\]
where $D_0$ denotes the Euclidean distance, and such that
\[
\min\{C_0,C_1\}\leq F_{\eta,\bar{\eta}}^{\mathrm{ext}}(z)\leq \max\{C_0,C_1\},
\]
if $\frac{\bar{\eta}}{2}\leq D_0(z,\mathcal{F}_\eta)\leq \bar{\eta},$ and
\[
0\leq F_{\eta,\bar{\eta}}^{\mathrm{ext}}(z)\leq C_0,
\]
if $z\in E_{\eta+\frac{\bar{\eta}}{2}}\setminus E_{\eta-\bar{\eta}}.$
For each edge $e \in E$ we define a smooth function $f_e(z)$ such that  
\[
f_{e,\eta}(z):= 
\left\{
\begin{matrix}
\log \left(\frac{w_e}{\ell_{D_0}(e)} \right), & \mbox{ if } D_0(z,e) \leq \frac{\eta}{2},\\
0, & \mbox{ if } D_0(z,e) \geq \eta,
\end{matrix}
\right.
\]
and $0\leq f_{e,\eta}(z)\leq \log \left(\frac{w_e}{\ell_{D_0}(e)} \right)$ if $\frac{\bar{\eta}}{2} \leq D_0(z,e) \leq \bar{\eta}.$ See Figure \ref{fFetapic} for pictures of $F_{\eta,\bar{\eta}}^{\mathrm{ext}},f_{e,\eta}.$
Define
\begin{equation}\label{fdef}
f:= \sum_e f_{e,\eta} + F_{\eta,\bar{\eta}}^{\mathrm{ext}}
\end{equation}
We claim that
\begin{equation}\label{closemetrics}
\sup_{x,y \in [-R,R]^d} \vert \bar{D}(x,y)-e^f\cdot D_0(x,y) \vert \leq \frac{\e}{2}
\end{equation}
for an appropriate choice of $C_0,C_1$ and $\bar{n},\bar{m}.$ Now we will choose these parameters. First, since $\bar{D}$ generates the Euclidean topology, there is an increasing function $\vphi:\R^+ \to \R^+$ such that $\lim_{\ell\to 0}\vphi(\ell)=0$ and such that
\begin{equation}\label{vphidefdef}
\sup_{x,y\in [-R,R]^d:D_0(x,y)\leq \ell} \bar{D}(x,y) \leq \vphi(\ell).
\end{equation}
Also, let $\Psi:\R^+\to\R^+$ be a function such that if $\vert z-w\vert\leq \Psi(\theta),$ then $\bar{D}(z,w)\leq\theta.$
Let $\bar{n},\bar{m}$ be such that
\begin{equation}\label{preparam1}
\bar{n}\bar{m} \geq \frac{1}{32\e}\sup_{\Psi(\e) \leq \ell \leq 2R} \frac{\vphi(\ell)}{\ell},
\end{equation}
and
\begin{equation}\label{smallwe}
w_e \leq \frac{\e}{128}.
\end{equation}
Let $\eta$ be small enough so that
\begin{equation}\label{etasmol}
\eta< \inf_{e}\frac{\e\ell_{D_0}(e)}{512 w_e},
\end{equation}
where the infimum is taken over edges of $G$ and let
\[
\bar{\eta}< \min\{\eta, \frac{1}{4}\Psi(\e)\}
\]
so that combining with \eqref{preparam1} we obtain
\begin{equation}\label{param1}
\bar{n}\bar{m} \geq \frac{1}{64\e}\sup_{\Psi(\e) \leq \ell \leq 2R} \frac{\vphi(\ell)}{\ell-\bar{\eta}},
\end{equation}

Now let
\begin{equation}\label{param2}
C_0=\log\left(\frac{\e}{128\bar{\eta}}\right),
\end{equation}
\begin{equation}\label{param3}
C_1 = \log \left(\frac{\e\bar{n}\bar{m}}{64}\right).
\end{equation}
We note that with the above choices we have
\begin{equation}\label{C0C1}
C_0>C_1.
\end{equation}
Also note that \eqref{closemetrics} implies Theorem \ref{app}.
To prove \eqref{closemetrics} we will need a couple of lemmas:
\begin{lemma}\label{graphapproxlowerbound}
Let $v_1,v_2 \in G,$ be adjacent vertices. Let $f$ be as in \eqref{fdef}. For small enough $\eta, \bar\eta$ we have
\[
d_G(v_1,v_2) - \frac{\e}{10 \vert G\vert} \leq e^f\cdot D_0(v_1,v_2)
\]
where $\vert G\vert$ denotes the number of edges in $G.$
\end{lemma}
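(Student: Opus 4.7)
The plan is to show that for every continuous path $\gamma:[0,1]\to\R^d$ from $v_1$ to $v_2$, $\int_0^1 e^{f(\gamma(t))}\,|\gamma'(t)|\,dt\geq w_e-\e/(10|G|)$; the claim on $e^f\cdot D_0(v_1,v_2)$ then follows by taking the infimum over~$\gamma$. Note that $w_e=d_G(v_1,v_2)$: since $e=(v_1,v_2)$ is an edge, $d_G(v_1,v_2)\leq w_e$, while the argument at~\eqref{dGdclose} gives $d_G(v_1,v_2)\geq \bar D(v_1,v_2)=w_e$.

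I would first dispense with paths that leave a neighborhood of the graph. If $\gamma$ exits $E_{\eta+\bar\eta}$, it must cross the shell $\{z:D_0(z,\mathcal F_\eta)\leq\bar\eta/2\}$, where $F^{\mathrm{ext}}\geq C_0=\log(\e/(128\bar\eta))$, at least twice (once leaving and once returning). Each crossing requires Euclidean travel $\geq\bar\eta/2$ and so contributes at least $e^{C_0}\cdot\bar\eta/2=\e/256$ to the integral, giving a total of at least $\e/128\geq w_e$ by~\eqref{smallwe}. So I may assume $\gamma\subset E_{\eta+\bar\eta}$.

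Next, I claim that if $\gamma$ stays in the $D_0$-tube $T_e:=\{z:D_0(z,e)\leq\eta\}$ around $e$, then $e^{f(\gamma(t))}\geq\alpha:=w_e/\ell_{D_0}(e)$ along~$\gamma$. Indeed, on $\{D_0(z,e)\leq\eta/2\}$ one has $f_e=\log\alpha$ while the remaining terms in~\eqref{fdef} are nonnegative. In the transition region $\{D_0(z,e)\in(\eta/2,\eta]\}$ I would choose the smooth interpolant in the definition of $f_e$ so that $f_e=\log\alpha$ on all but a thin outer shell of radial width $\bar\eta/2$; on that shell $F^{\mathrm{ext}}\geq C_0$, and by~\eqref{etasmol} (which forces $\alpha\bar\eta<\e/512$) one checks that $e^{C_0}\geq\alpha$. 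Combined with $\ell_{D_0}(\gamma)\geq\ell_{D_0}(e)$, this gives $\int_\gamma e^f|\gamma'|\geq\alpha\,\ell_{D_0}(e)=w_e$.

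The remaining case is $\gamma\subset E_{\eta+\bar\eta}$ with $\gamma$ leaving $T_e$. Here I would extract from $\gamma$ a walk $v_1=u_0,u_1,\ldots,u_m=v_2$ in $G$ by tracking the inner tubes $T_{e'}^{\mathrm{in}}:=\{z:D_0(z,e')\leq\eta/2\}$ that $\gamma$ enters in order (including intermediate tubes that $\gamma$ passes through during transitions). The induced walk has weight $\sum w_{u_iu_{i+1}}\geq d_G(v_1,v_2)=w_e$. Applying the previous paragraph locally to each visited tube---projecting onto the corresponding edge so that $e^f\geq w_{e'}/\ell_{D_0}(e')$ translates into a length lower bound proportional to the fraction of that edge traversed---the $e^f$-contribution of the $i$-th visit is at least $w_{u_iu_{i+1}}$ minus a partial-traversal error controlled by~\eqref{etasmol}. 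Summing over $m\leq|G|$ visits then delivers the desired bound. I expect the main obstacle to lie in this bookkeeping: precisely controlling the transitions between tubes (which concentrate near graph vertices, where many inner tubes overlap) and ensuring that the cumulative partial-traversal error stays within $\e/(10|G|)$, for which the smallness of $\eta$ forced by~\eqref{etasmol} is essential.
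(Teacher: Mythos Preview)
Your approach diverges from the paper's. The paper does not argue case-by-case over all paths; instead it first invokes Lemma~\ref{trapped} (proved just before in the same subsection, and not depending on Lemma~\ref{graphapproxlowerbound}) to confine the $e^f\cdot D_0$-\emph{minimizer} $p$ from $v_1$ to $v_2$ to the tube $(v_1,v_2)+B_{\eta+\bar\eta}(0)$, and then projects $p$ orthogonally onto the segment $e=(v_1,v_2)$. Using that, away from the endpoints of $e$, the value of $f$ at $p(t)$ dominates its value at the projection $P(t)$ while $|p'|\ge|P'|$, the paper obtains $\ell_{e^f\cdot D_0}(p)\ge w_e-4\tau$ in a few lines. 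Your Cases~1 and~2 in effect rebuild these ingredients: Case~1 is the barrier estimate underlying Lemma~\ref{trapped}, and Case~2 is the projection bound restricted to paths already in $T_e$.

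The substantive gap is your Case~3. Two issues. First, the bound $m\le|G|$ on the number of inner-tube visits is not justified: nothing prevents $\gamma$ from oscillating between two adjacent tubes arbitrarily many times near a shared vertex, so $m$ is not a priori bounded. Second, even if $m$ were bounded, the $i$-th visit contributes only $\alpha_{e'}$ times the Euclidean length traversed inside $T_{e'}^{\mathrm{in}}$, which for a partial traversal is strictly less than $w_{u_iu_{i+1}}$; summing these does not produce $\sum_i w_{u_iu_{i+1}}$, so the walk inequality $\sum_i w_{u_iu_{i+1}}\ge d_G(v_1,v_2)$ is not actually doing the work you want. The paper sidesteps this bookkeeping entirely by projecting the minimizer onto the single edge $e$ throughout and absorbing any near-vertex excursions into the endpoint error (the $t_0,t_1$ cutoff and the $4\tau$ term). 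If you wish to keep a direct argument, the fix is to project $\gamma$ onto $e$---not onto whichever tube it happens to be visiting---and treat the endpoint balls $B_\tau(v_i)$ as a separate small error, rather than attempting tube-by-tube accounting.
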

\begin{lemma}\label{graphapproxupperbound}
Let $v_1,v_2 \in G,$ be adjacent vertices. Let $f$ be as in \eqref{fdef}. For small enough $\eta, \bar\eta$ we have
\[
e^f\cdot D_0(v_1,v_2) \leq d_G(v_1,v_2),
\]
where $\vert G\vert$ denotes the number of edges in $G.$
\end{lemma}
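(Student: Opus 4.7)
The plan is to take the straight-line segment $P = e = [v_1, v_2]$ as a test path and show that $\int_P e^f\, dD_0 \leq w_e = d_G(v_1, v_2)$, parametrizing $P$ by $D_0$-arclength. Along $P$ every point lies on $e \subset E$, so $\bar D(z, E) = 0 \leq \eta - \bar\eta$ yields $F^{\mathrm{ext}}_{\eta, \bar\eta}(z) = 0$, and $D_0(z, e) = 0 \leq \eta/2$ yields $f_{e, \eta}(z) = \log(w_e/\ell_{D_0}(e))$. Hence $f|_P$ equals the constant $\log(w_e/\ell_{D_0}(e))$ plus the contributions $f_{e', \eta}|_P$ from other edges $e' \neq e$, which are supported near the two endpoints of $e$.

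I would split $P$ into the bulk $B = \{z \in e : D_0(z, v_i) > \eta \text{ for } i = 1, 2\}$ and the two endpoint pieces $T_i = \{z \in e : D_0(z, v_i) \leq \eta\}$. The smallness conditions on $\tau$ and $K$ in the construction of $G_4$ ensure that non-incident edges are separated by $D_0$-distance much greater than $\eta$, so on $B$ no edge $e' \neq e$ lies within $D_0$-distance $\eta$; consequently $f|_B = \log(w_e/\ell_{D_0}(e))$, giving the bulk contribution $(w_e/\ell_{D_0}(e))(\ell_{D_0}(e) - 2\eta) = w_e - 2\eta w_e/\ell_{D_0}(e)$.

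The main obstacle is the endpoint contribution. On $T_i$, edges $e' \ni v_i$ with $e' \neq e$ may raise $f$ by as much as $\sum_{e' \ni v_i} \log(w_{e'}/\ell_{D_0}(e'))$, but the contribution of each such $e'$ is supported on a subinterval of $T_i$ of $D_0$-length at most $\eta/\sin \theta_{e', e}$, where $\theta_{e', e}$ is the angle between $e$ and $e'$ at $v_i$. Since $G$ is already fixed (and finite) by the time $\eta$ is chosen, both $\prod_{e' \ni v_i}(w_{e'}/\ell_{D_0}(e'))$ and $\min \sin\theta_{e', e}$ are bounded by quantities depending only on $G$. By shrinking $\eta$ further below the threshold in \eqref{etasmol} by a factor depending only on this data -- permitted since the hypothesis is only that $\eta, \bar\eta$ be "small enough" -- the contribution of each $T_i$ can be arranged to be at most $\eta w_e/\ell_{D_0}(e)$, which absorbs the bulk deficit $2\eta w_e/\ell_{D_0}(e)$ when summed over the two endpoints.

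Combining the bulk and endpoint estimates yields $\int_P e^f\, dD_0 \leq w_e$, and hence $e^f \cdot D_0(v_1, v_2) \leq w_e = d_G(v_1, v_2)$. The delicate step is the endpoint bookkeeping, where the straightforward bound $e^f \leq \prod_{e' \ni v_i}(w_{e'}/\ell_{D_0}(e'))$ can exceed $w_e/\ell_{D_0}(e)$ at a single point; we exploit the fact that the bad subintervals have $D_0$-length controlled by the angular separation of edges at $v_i$, and that $\eta$ may be chosen last (after $G$ is fully determined) with as much smallness as needed to absorb the vertex contribution.
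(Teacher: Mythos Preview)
Your approach coincides with the paper's: take the straight segment $P=e=[v_1,v_2]$ and bound $e^f\cdot D_0(v_1,v_2)$ by $\ell_{e^f\cdot D_0}(P)$. The paper's proof is a single line asserting $\ell_{e^f\cdot D_0}(P)\leq w_e$ ``by construction of $G$,'' and does not explicitly address the overlap of the bumps $f_{e',\eta}$ near the shared endpoints; you are right to flag this as the nontrivial point.

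However, your resolution of the overlap has a genuine gap. You claim that by shrinking $\eta$ the contribution of each endpoint piece $T_i$ can be made at most $\eta\, w_e/\ell_{D_0}(e)$. This cannot hold: on all of $T_i$ one already has $f_{e,\eta}=\log(w_e/\ell_{D_0}(e))$, so $\int_{T_i} e^f\,dD_0\geq \eta\, w_e/\ell_{D_0}(e)$ for every $\eta>0$, with strict inequality whenever some incident $e'\neq e$ has $w_{e'}>\ell_{D_0}(e')$. For instance, if two edges meet at $v_1$ at a right angle and both have ratio $2$, then $e^f=4$ on $B_{\eta/2}(v_1)\cap e$ and $e^f\geq 2$ on the rest of $T_1$, so $\int_{T_1} e^f\,dD_0\geq 3\eta>2\eta$ for all $\eta$. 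Both the bulk deficit $2\eta\, w_e/\ell_{D_0}(e)$ and the endpoint excess scale linearly in $\eta$, so shrinking $\eta$ does not improve the ratio. (There is also a smaller issue: the support of $f_{e',\eta}|_e$ has length $\eta/\sin\theta_{e',e}\geq\eta$, hence is not contained in your $T_i$ when $\theta_{e',e}<\pi/2$; your bulk region $B$ is then not free of foreign contributions.)

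The clean fix is to aim for the same harmless slack as in the companion lower bound: show $e^f\cdot D_0(v_1,v_2)\leq d_G(v_1,v_2)+\e/(10|G|)$. Your computation already gives $\ell_{e^f\cdot D_0}(P)\leq w_e + C_G\,\eta$ for a constant $C_G$ depending only on the (already fixed) combinatorics and angles of $G$, and choosing $\eta$ small then yields this weakened inequality. That is all that is actually used downstream in Lemmas~\ref{upperedge} and~\ref{trapped}.
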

These two lemmas tell us that for adjacent vertices, the distances $e^f\cdot D_0$ and $d_G$ are very close. Next we need a lemma that constrains geodesics between points close to edges of $G$ to also be close to edges of $G.$
\begin{lemma}\label{trapped}
Let $\bar{\eta}>0.$ Suppose that $x,y \in E_\eta,$ and let $P:[0,1]\to \R^d$ be a $e^f\cdot D_0$-distance minimizing path between them. Then if $C_0,C_1,\bar{n},\bar{m},\eta$ are chosen such that \eqref{param1}, \eqref{param2}, \eqref{param3} hold, we have
\[
P([0,1]) \subset E_{\eta+ \frac{\bar{\eta}}{2}}.
\]
\end{lemma}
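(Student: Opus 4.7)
The plan is to argue by contradiction. Assume $P$ exits $E_{\eta+\bar\eta/2}$, and extract a maximal excursion interval $[s_1,s_2]\subset[0,1]$ with $P(s_1),P(s_2)\in\partial E_\eta$ on which $P$ reaches a point outside $E_{\eta+\bar\eta/2}$; this exists because $x,y\in E_\eta$ and $P$ is continuous. The idea is then to replace $P|_{[s_1,s_2]}$ by a strictly shorter graph detour, contradicting the minimality of $P$.

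For the lower bound, observe that $P|_{[s_1,s_2]}$ must cross, both going out and coming back, the $D_0$-shell $\{z:D_0(z,\mathcal F_\eta)\leq\bar\eta/2\}$ on which $F_{\eta,\bar\eta}^{\mathrm{ext}}=C_0$. Using the smallness condition $\bar\eta<\Psi(\e)/4$ together with uniform continuity of $\bar D$ against $D_0$ to relate the $\bar D$-boundary $\partial E_\eta$ to the $D_0$-level set near $\mathcal F_\eta$, the Euclidean length of $P|_{[s_1,s_2]}$ inside this shell is at least $\bar\eta$, yielding
\[
\ell_{e^f\cdot D_0}\!\left(P|_{[s_1,s_2]}\right)\;\geq\; e^{C_0}\bar\eta \;=\; \frac{\e}{128}
\]
by \eqref{param2}. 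If the excursion additionally reaches $\R^d\setminus E_{\eta+\bar\eta}$, this lower bound is amplified further by $e^{C_1}=\e\bar n\bar m/64$ times the Euclidean length spent in that region.

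For the detour, I would use that $\mathcal V G$ is a $\delta$-net in both metrics (with $\delta$ as small as we like) to choose vertices $v_1,v_2\in\mathcal V G$ with $\bar D(P(s_i),v_i)\leq\delta$, and concatenate the straight Euclidean segment from $P(s_1)$ to $v_1$, a graph path in $G_4$ from $v_1$ to $v_2$, and the segment from $v_2$ to $P(s_2)$. Along the graph portion, $F_{\eta,\bar\eta}^{\mathrm{ext}}=0$ (the edges of $G_4$ sit in $E_{\eta-\bar\eta}$) and only the $f_{e,\eta}$ of the edge being traversed contributes, so this part has $e^f\cdot D_0$-length exactly $d_G(v_1,v_2)\leq\bar D(v_1,v_2)\leq 2\delta+\bar D(P(s_1),P(s_2))$. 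The two end-segments have $D_0$-length as small as we like by uniform continuity and are weighted by at most $e^{C_0}\max_e(w_e/\ell_{D_0}(e))$; by \eqref{smallwe}, \eqref{etasmol} and \eqref{param2} their combined contribution is a small fraction of $\e/128$.

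The main obstacle is to show $\bar D(P(s_1),P(s_2))$ is strictly smaller than the excursion's $e^f\cdot D_0$-length. The plan is to combine the modulus $\varphi$ from \eqref{vphidefdef} with the chain
\[
|P(s_1)-P(s_2)|\;\leq\;\ell_{D_0}\!\left(P|_{[s_1,s_2]}\right)\;\leq\;\ell_{e^f\cdot D_0}\!\left(P|_{[s_1,s_2]}\right)
\]
to get $\bar D(P(s_1),P(s_2))\leq\varphi(\ell_{e^f\cdot D_0}(P|_{[s_1,s_2]}))$. The parameter choice \eqref{param1} controls $\varphi(\ell)/(\ell-\bar\eta)$ on $[\Psi(\e),2R]$, which forces the detour to beat the excursion whenever $|P(s_1)-P(s_2)|\geq\Psi(\e)$; when instead $|P(s_1)-P(s_2)|<\Psi(\e)$ one uses $\bar D(P(s_1),P(s_2))\leq\e$ together either with the $e^{C_1}$-amplification for excursions that dip into $\R^d\setminus E_{\eta+\bar\eta}$, or with the smallness of $\varphi(2\bar\eta)$ for shallow excursions, to finish the case. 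Orchestrating the small parameters $\eta,\bar\eta,\delta$ and the constants $C_0,C_1$ so that detour $<$ excursion holds in every case is the technical heart of the proof; the chain \eqref{smalldiam}--\eqref{C0C1} is tuned precisely to make this comparison go through uniformly.
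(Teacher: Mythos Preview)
Your strategy matches the paper's: argue by contradiction, isolate an excursion interval $[s_1,s_2]$ with endpoints on $\partial E_\eta$ during which $P$ leaves $E_{\eta+\bar\eta/2}$, lower-bound the $e^f\cdot D_0$-length of that excursion using the $C_0$-shell and $C_1$-exterior weights, build a competing detour through nearby vertices along edges of $G$, upper-bound the detour via $\varphi$ and the parameter relations, and split on $|P(s_1)-P(s_2)|\geq\Psi(\e)$ versus $<\Psi(\e)$. The paper does precisely this (with $t_{\mathrm{start}},t_{\mathrm{end}}$ playing the role of your $s_1,s_2$), so the approaches coincide.

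One step in your ``main obstacle'' paragraph is miswired and would not close as written. The chain you display ends at $\bar D(P(s_1),P(s_2))\leq\varphi(\ell_{e^f\cdot D_0}(P|_{[s_1,s_2]}))$, but this pushes the detour bound in the wrong direction: to finish from there you would need $\varphi(L)<L$ for $L=\ell_{e^f\cdot D_0}(P|_{[s_1,s_2]})$, and nothing in the setup gives that. The correct pivot (and what the paper actually does) is the \emph{Euclidean} length $\ell:=\ell_{D_0}(P|_{[s_1,s_2]})$. On the one hand, since $f\geq C_1$ everywhere on the excursion and $f\geq C_0$ on the shell, one gets $\ell_{e^f\cdot D_0}(P|_{[s_1,s_2]})\geq e^{C_1}\ell+(e^{C_0}-e^{C_1})\bar\eta$. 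On the other hand, the detour through the graph costs at most $\varphi(\ell)$ plus the small end-segment and half-edge corrections, and \eqref{param1} together with \eqref{param3} gives $\varphi(\ell)\leq e^{C_1}(\ell-2\bar\eta)$ for $\ell\geq\Psi(\e)$, which with the shell bonus yields the strict inequality. Your second displayed inequality $\ell_{D_0}\leq\ell_{e^f\cdot D_0}$ is true on the excursion (because $f\geq C_1>0$ there) but is a red herring; drop it and route the comparison through $\ell$ as above, and your sketch becomes the paper's proof.
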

These lemmas will be proved in subsection \ref{appproofs}. For now we will assume them and prove \eqref{closemetrics}.

We will proceed by proving upper and lower bounds for $e^f \cdot D_0(x,y)$ in the case that $x,y\in E_\eta$ first in subsection \ref{edges}, and then in subsection \ref{finalappproof} we prove upper and lower bounds in the case that $x,y$ are far from any edge.

\subsection{Comparing metrics: the edge case}\label{edges}

The main results in this subsection are the following.

\begin{lemma}\label{upperedge}
Suppose that $x,y$ are vertices on the graph $G.$ Then we have the inequality
\begin{equation}
e^f\cdot D_0(x,y) \leq d_G(x,y).
\end{equation}
\end{lemma}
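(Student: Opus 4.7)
The plan is to reduce this to Lemma~\ref{graphapproxupperbound}, which handles the case of adjacent vertices, by chaining inequalities along a $d_G$-shortest path.

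More precisely, let $x, y$ be vertices of $G$. Since $G$ has finitely many vertices, there exists a sequence of vertices $x = v_0, v_1, \dots, v_n = y$ such that each pair $(v_i, v_{i+1})$ is an edge of $G$ and
\[
d_G(x,y) = \sum_{i=0}^{n-1} d_G(v_i, v_{i+1}) = \sum_{i=0}^{n-1} w_{(v_i, v_{i+1})}.
\]
By Lemma~\ref{graphapproxupperbound} applied to each consecutive pair $v_i, v_{i+1}$ (which is permissible provided $\eta, \bar\eta$ have already been chosen small enough, which we may arrange globally since $G$ is finite), we have
\[
e^f \cdot D_0(v_i, v_{i+1}) \leq d_G(v_i, v_{i+1}).
\]

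Then I would invoke the triangle inequality for the distance function $e^f \cdot D_0$, which is a bona fide metric on $\mathbb R^d$. This gives
\[
e^f \cdot D_0(x, y) \leq \sum_{i=0}^{n-1} e^f \cdot D_0(v_i, v_{i+1}) \leq \sum_{i=0}^{n-1} w_{(v_i, v_{i+1})} = d_G(x, y),
\]
which is the claim.

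There is no real obstacle here: once Lemma~\ref{graphapproxupperbound} is available for adjacent pairs, the extension to arbitrary vertex pairs is immediate from the triangle inequality along a $d_G$-geodesic in $G$. The only mild point to be careful about is that all the constants $C_0, C_1, \bar n, \bar m, \eta, \bar\eta$ should be fixed once and for all (as they are in the setup preceding the lemma) so that Lemma~\ref{graphapproxupperbound} applies simultaneously to every edge of $G$; this is fine because $G$ has only finitely many edges.
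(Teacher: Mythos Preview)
Your proof is correct and essentially identical to the paper's own argument: take a $d_G$-minimizing edge path between $x$ and $y$, apply the triangle inequality for $e^f\cdot D_0$ along its vertices, and invoke Lemma~\ref{graphapproxupperbound} on each edge. There is nothing to add.
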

\begin{lemma}\label{loweredge}
Suppose that $x,y$ are vertices in $G.$ Then we have the bound
\begin{equation}
e^f\cdot D_0(x,y) \geq \bar{D}(x,y)-\frac{\e}{4}.
\end{equation}
\end{lemma}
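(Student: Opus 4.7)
My plan is to take an $e^f \cdot D_0$-length minimizing path $P: [0,L] \to \R^d$ from $x$ to $y$, use Lemma \ref{trapped} to confine $P$ to $E_{\eta+\bar\eta/2}$, decompose $P$ along the edges of $G$, and apply Lemma \ref{graphapproxlowerbound} to each piece. Since $x, y$ are vertices of $G$ they lie on $E$ and hence in $E_\eta$, so Lemma \ref{trapped} gives $P([0,L]) \subset E_{\eta+\bar\eta/2}$. By our construction of $G$, the set $E_{\eta+\bar\eta/2}$ is a union of thin tubes around the edges of $G$ that overlap only in small Euclidean balls around the vertices of $G$ (using the choice of large $\bar n, \bar m$ and small $\eta, \bar\eta$ so that distinct edges meet only at their shared endpoints).

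I would then choose times $0 = t_0 < t_1 < \ldots < t_k = L$ and vertices $x = v_0, v_1, \ldots, v_k = y$ of $G$ such that consecutive $v_{i-1}, v_i$ are joined by an edge $e_i$ of $G$, each $P(t_i)$ is within Euclidean distance $\bar\eta$ of $v_i$, and each sub-path $P|_{[t_{i-1}, t_i]}$ stays in the tube around $e_i$. The triangle inequality together with Lemma \ref{graphapproxlowerbound} would then give, for each $i$,
\[
\ell_{e^f D_0}\bigl(P|_{[t_{i-1}, t_i]}\bigr) \geq e^f \cdot D_0(v_{i-1}, v_i) - r_i \geq w_{e_i} - \frac{\e}{10|G|} - r_i,
\]
where $r_i$ accounts for the $e^f \cdot D_0$-distance between $P(t_i)$ and $v_i$ and is at most $\bar\eta$ times the supremum of $e^f$ in a small neighborhood of $v_i$. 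Summing over $i$ and using that $v_0, \ldots, v_k$ forms a walk in $G$ from $x$ to $y$, so that $\sum_i w_{e_i} \geq d_G(x,y) \geq \bar D(x,y)$ by the paragraph after \eqref{dGdclose}, I would obtain
\[
e^f \cdot D_0(x, y) \geq \bar D(x,y) - \frac{k\e}{10|G|} - \sum_i r_i.
\]

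The main obstacle will be bounding the number of pieces $k$ so that $k\e/(10|G|)$ remains of order $\e$. A priori $P$ could oscillate between edge tubes, producing $k \gg |G|$. To handle this I would exploit the length-minimizing property of $P$: if the walk $v_0, \ldots, v_k$ repeated a vertex, $P$ would contain a closed loop that could be shortcut (up to negligible error coming from the endpoint mismatches $r_i$) to produce a strictly shorter path, contradicting the minimality of $P$. This forces the $v_i$ to be distinct, so $k \leq |V(G)|$; and in our construction the number of vertices and the number of edges are comparable, giving $k \leq C|G|$ for an absolute constant $C$ and hence $k\e/(10|G|) \leq C\e/10$. The endpoint errors $\sum_i r_i$ can then be made negligibly small by choosing $\bar\eta$ sufficiently small relative to the supremum of $e^f$ in the vertex neighborhoods (which is bounded uniformly in $\bar\eta$ by a quantity depending only on $G$). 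Adjusting the constants and the choice of parameters yields the required inequality $e^f \cdot D_0(x,y) \geq \bar D(x,y) - \e/4$.
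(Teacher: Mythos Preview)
Your overall strategy is the paper's: take an $e^f\cdot D_0$-geodesic $P$ from $x$ to $y$, confine it to the tube system via Lemma~\ref{trapped}, record the sequence of vertices $v_0,\dots,v_k$ whose edge-tubes $P$ successively visits, lower-bound each piece, and sum. The paper packages the per-piece estimate as a separate Lemma~\ref{refinedloweredge} (proved by projecting $P$ onto the relevant edge), while you invoke Lemma~\ref{graphapproxlowerbound}; these are the same mechanism.

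The one genuine gap is your bound on $k$. You write that if the walk $v_0,\dots,v_k$ repeated a vertex then ``$P$ would contain a closed loop that could be shortcut \dots\ to produce a strictly shorter path, contradicting the minimality of $P$.'' This cannot work as stated: $P$ is already a geodesic, so $P|_{[t_i,t_j]}$ is itself an $e^f\cdot D_0$-geodesic between $P(t_i)$ and $P(t_j)$, and no replacement path between these two points can be strictly shorter. What you actually need is the contrapositive observation. If $v_i=v_j$ with $i<j$, then $P(t_i),P(t_j)\in B_{\eta+\bar\eta}(v_i)$, so
\[
\ell_{e^f\cdot D_0}\bigl(P|_{[t_i,t_j]}\bigr)=e^f\cdot D_0\bigl(P(t_i),P(t_j)\bigr)\le 2(\eta+\bar\eta)\,e^{M},
\qquad M:=\sup_{B_{2\eta}(v_i)}f,
\]
while at the same time $P|_{[t_i,t_j]}\supset P|_{[t_i,t_{i+1}]}$ traverses the full tube of $e_{i+1}$, which (by the projection argument you already allow, or directly from the definition of $f$) forces $\ell_{e^f\cdot D_0}(P|_{[t_i,t_j]})\ge w_{e_{i+1}}-O(\eta)$. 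Since $M$ and $\min_e w_e>0$ depend only on $G$, choosing $\eta,\bar\eta$ small enough makes these two bounds incompatible. That is the correct contradiction; once phrased this way your conclusion $k\le|V(G)|\le 2|G|$ is valid and the rest of the argument goes through. The paper avoids this step by taking instead the \emph{minimal} set of edges $E_{x,y}$ whose tubes cover $P$ and letting $x_1,\dots,x_n$ be a walk through $E_{x,y}$, so that $n-1\le |E_{x,y}|\le |G|$ directly.
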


First we prove Lemma \ref{upperedge}.
\begin{proof}[Proof of Lemma \ref{upperedge}]
Let $\{e_i\}_{i=1}^n$ be a path consisting of edges on $G$ connecting $x,y$ with minimal $d_G$-length. Let $\{x_i\}_{i=1}^n$ be the vertices that this path passes through in order, so that $x_i$ and $x_{i+1}$ are adjacent, and $x_1=x,$ $x_n=y.$ Then we have by the triangle inequality that
\[
e^f\cdot D_0(x,y) \leq \sum_{i=1}^{n-1} e^f\cdot D_0(x_i,x_{i+1}).
\]
Now using Lemma \ref{graphapproxupperbound} we obtain that
\[
e^f\cdot D_0(x,y) \leq \sum_{i=1}^{n-1} d_G(x_i,x_{i+1}) = d_G(x,y).
\]
This proves Lemma \ref{upperedge}. 
\end{proof}
We will now prove Lemma \ref{loweredge}.

Let $P:[0,1]\to \R^d$ be a $e^f \cdot D_0$-length minimizing path between $x,y$ such that $P(0)=x,$ $P(1)=y.$ Then we know by Lemma \ref{trapped} that $P([0,1])\subset E_{\eta+\bar{\eta}}.$ We must prove that for any vertices $x,y,$
\[
d_G(x,y)-\frac{\e}{4} \leq \ell_{e^f\cdot D_0}(P).
\]
For each edge $e$ on $G$ and $\theta>0$ we let $T_e^\theta:=\{z: D_0(z,e)\leq \theta\}.$ Let $E_{x,y}$ be the minimal set of edges such that
\[
\cup_{e \in E_{x,y}} T_e^{\eta+\bar{\eta}}
\]
is connected, and contains $P([0,1]).$ Let $\{x_i\}_{i=1}^n$ be such that $x_i$ and $x_{i+1}$ are adjacent, the edge between $x_i,x_{i+1}$ is in $E_{x,y},$ $x\in T_{(x_1,x_2)}^{\eta+\bar{\eta}},$ and $y\in T_{(x_{n-1},x_n)}^{\eta+\bar{\eta}},$ where $(v_1,v_2)$ denotes the edge between $v_1,v_2$ if $v_1,v_2$ are adjacent vertices in $G$ (see Figure \ref{xipic}).
%%%%%%%%%%%%%%%%%%%%%%%%%%%%%%%%%%%%%%%%%%%%

\begin{figure}
\begin{center}
\includegraphics[width=0.35\textwidth]{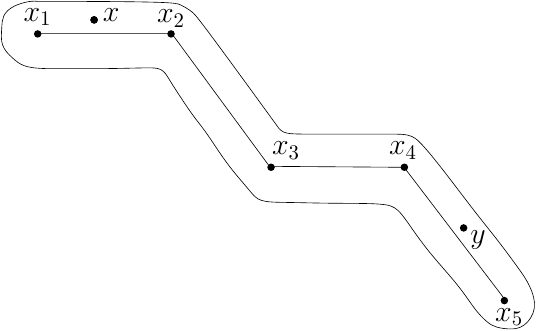}  
\caption{\label{xipic} The points $x_i.$
}
\end{center}
\end{figure}
%%%%%%%%%%%%%%%%%%%%%%%%%%%%%%%%%%%%%%%%%%%%.

Now we need the following lemma.
\begin{lemma}\label{refinedloweredge}
We have that
\begin{equation}
e^f\cdot D_0(x_1,x_{n-1}) \geq \sum_{i=1}^{n-1} \bar{D}(x_i,x_{i+1}) - \frac{\e}{20}.
\end{equation}
\end{lemma}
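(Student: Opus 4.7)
The plan is to decompose the $e^f\cdot D_0$-geodesic $P$ into sub-arcs traversing the tubes $T_{e_i}^{\eta+\bar\eta}$ around each edge $e_i=(x_i,x_{i+1})$ one at a time, lower bound the $e^f\cdot D_0$-length contribution of each piece in terms of the edge weight $w_{e_i}=\bar D(x_i,x_{i+1})$, and sum. The essential inputs are Lemma~\ref{trapped}, which confines $P$ to $E_{\eta+\bar\eta/2}$, and the explicit form of $f$: on the inner tube $\{D_0(\cdot,e_i)\leq\eta/2\}$ we have $f_{e_i,\eta}\equiv\log(w_{e_i}/\ell_{D_0}(e_i))$, while all other $f_{e_j,\eta}$ and $F_{\eta,\bar\eta}^{\mathrm{ext}}$ are nonnegative there, so $e^{f}\geq w_{e_i}/\ell_{D_0}(e_i)$ on that inner tube.

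First I would upgrade the $\bar D$-confinement of $P$ to a Euclidean confinement via uniform continuity of $\bar D$ with respect to $D_0$, concluding that every point of $P$ lies within Euclidean distance $\eta+O(\bar\eta)$ of the edge set $E$. Since tubes $T_{e}^{\eta+\bar\eta}$ around non-adjacent edges of $G$ are disjoint (edges of $G$ only meet at shared vertices, and the tube radius $\eta+\bar\eta$ is much smaller than the spacing between disjoint edges), $P$ traverses the tubes in an order matching a walk in $G$, and by the choice of the $x_i$ this walk is $x_1\to x_2\to\cdots\to x_n$. For each $i$ I would let $P_i$ be the sub-arc of $P$ responsible for the traversal of $T_{e_i}^{\eta+\bar\eta}$, joining a point within Euclidean distance $O(\eta)$ of $x_i$ to a point within Euclidean distance $O(\eta)$ of $x_{i+1}$.

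To lower bound $\ell_{e^f\cdot D_0}(P_i)$, I would project $P_i$ onto the line through $e_i$ to see that its Euclidean length is at least $\ell_{D_0}(e_i)-O(\eta)$, and argue that essentially all of this length is spent inside the inner tube. Combined with $e^f\geq w_{e_i}/\ell_{D_0}(e_i)$ on that portion, this gives
\[
\ell_{e^f\cdot D_0}(P_i)\;\geq\; \frac{w_{e_i}}{\ell_{D_0}(e_i)}\bigl(\ell_{D_0}(e_i)-O(\eta)\bigr)\;=\;w_{e_i}-O\!\left(\frac{\eta\,w_{e_i}}{\ell_{D_0}(e_i)}\right).
\]
By \eqref{etasmol} each per-edge error is at most $\e/512$, so by choosing $\eta$ smaller than the bound in \eqref{etasmol} by an additional factor depending on $|G|$ (equivalently, on the maximum possible walk length $n$), the total error summed over $i$ is kept below $\e/20$. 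Adding the contributions and using $w_{e_i}=\bar D(x_i,x_{i+1})$ yields the stated inequality.

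The main obstacle is ruling out the possibility that $P_i$ ``skirts'' the edge at Euclidean radius close to $\eta$, where $f_{e_i,\eta}$ has decayed toward $0$ and $e^{f}$ can be only slightly above $1$; if this were allowed, $\ell_{e^f\cdot D_0}(P_i)$ could be as small as $\ell_{D_0}(e_i)$, which may be much less than $w_{e_i}$. The barrier function $F_{\eta,\bar\eta}^{\mathrm{ext}}=C_0=\log(\e/(128\bar\eta))$ on the thin Euclidean shell $\{D_0(\cdot,\mathcal F_\eta)\leq\bar\eta/2\}$ is precisely what makes such skirting uneconomical: each crossing of the shell contributes at least $e^{C_0}\bar\eta=\e/128$ to $\ell_{e^f\cdot D_0}(P)$, so together with Lemma~\ref{trapped} the path is forced to remain in a region where the inner-tube estimate on $e^f$ effectively applies, and the projection argument goes through.
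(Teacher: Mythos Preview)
Your overall plan---decompose the geodesic into tube traversals and lower bound each by the edge weight $w_{e_i}=\bar D(x_i,x_{i+1})$---matches the paper's. The gap is in your resolution of the ``skirting'' obstacle you correctly identify in the last paragraph.

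The barrier $F_{\eta,\bar\eta}^{\mathrm{ext}}=C_0$ lives only on the thin shell $\{D_0(\cdot,\mathcal F_\eta)\leq\bar\eta/2\}$, i.e.\ at Euclidean distance $\approx\eta$ from $E$; inside $E_{\eta-\bar\eta}$ one has $F_{\eta,\bar\eta}^{\mathrm{ext}}\equiv 0$. Lemma~\ref{trapped} confines the geodesic to $E_{\eta+\bar\eta/2}$, but that is fully compatible with the geodesic staying at radius, say, $3\eta/4$ from $e_i$ for the entire traversal---never crossing the shell, never paying the $e^{C_0}\bar\eta$ toll you invoke, yet sitting in the annulus $\{\eta/2<D_0(\cdot,e_i)<\eta-\bar\eta\}$ where $f_{e_i,\eta}$ has already decayed toward $0$ and $F_{\eta,\bar\eta}^{\mathrm{ext}}=0$. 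On that track the lower bound $e^f\geq w_{e_i}/\ell_{D_0}(e_i)$ simply fails, so your per-edge estimate and hence the sum are unjustified. (The side claim that ``all other $f_{e_j,\eta}$ are nonnegative'' is also not true in general, since $\log(w_{e_j}/\ell_{D_0}(e_j))$ can have either sign; away from vertices those terms are in fact zero, but this does not rescue the argument.)

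The paper closes this gap differently and more directly. Rather than trying to force the geodesic $\tilde P$ into the inner half-tube, it \emph{projects} $\tilde P$ radially onto the segment $e_i$ to get a path $\bar P$ along the edge, and uses the pointwise monotonicity $f(\bar P(t))\leq f(\tilde P(t))$ (valid once $\bar P(t)$ is away from the endpoints of $e_i$, by the explicit definition of $f$) together with $|\bar P'(t)|\leq|\tilde P'(t)|$ to obtain
\[
\int_{s_i}^{s_{i+1}} e^{f(\tilde P(t))}|\tilde P'(t)|\,dt\;\geq\;\int_{s_i}^{s_{i+1}} e^{f(\bar P(t))}|\bar P'(t)|\,dt.
\]
Since $f\equiv\log\bigl(w_{e_i}/\ell_{D_0}(e_i)\bigr)$ on $e_i$, the right-hand side is $w_{e_i}$ up to an endpoint error of size $\e/(20|G|)$. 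No barrier argument and no confinement to the inner tube are needed for this step.
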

\begin{proof}
Let $\tilde{P}:[0,1]\to \R^d$ be an $e^f \cdot D_0$-length minimizing path between $x_1,x_{n-1}.$ Recall that by Lemma \ref{trapped} we know that $\tilde{P}$ is contained in $E_{\eta+\frac{\bar{\eta}}{2}}.$ Suppose that $e_0$ is an edge and $x \in T_{e_0}^\theta \setminus \cup_{e \neq e_0} T_{e}^\theta$ for small enough $\theta>0,$ and $\bar{x}$ is such that $D_0(x,\bar{x}) = D_0(x,e).$ Then if $\bar{x}$ is at $D_0$-distance at least $\theta$ from the endpoints of $e_0,$ we have $f(\bar{x})\leq f(x)$ (this comes directly from the definition of $f$). We will define the sequences of times $\{s_i\}_{i=1}^n, \{\bar{s}_i\}_{i=1}^n$ such that
\[
s_i=\sup\{t:\tilde{P}(t)\in B_{\eta+\bar{\eta}}(x_i)\},
\]
and
\[
\bar{s}_i=\inf\{t:\tilde{P}(t)\in B_{\eta+\bar{\eta}}(x_{i+1})\},
\]
so that
\[
s_1<\bar{s}_1<s_2 < \cdots < s_n<\bar{s}_n.
\]
Then using that for any edges $e_1,e_2$ we have that $\mathrm{Diam}_d(T_{e_1}^{\eta+\bar{\eta}} \cap T_{e_2}^{\eta+\bar{\eta}}) \to 0$ as $\eta,\bar{\eta}\to 0$ it suffices to prove that
\[
\ell_{e^f\cdot D_0}(\tilde{P}\vert_{[s_i,s_{i+1}]}) \geq d(x_i,x_{i+1}) - \frac{\e}{20\vert G\vert},
\]
where $\vert G\vert$ denotes the number of edges in the graph $G.$ Let $(x_i,x_{i+1})$ be the segment between $x_i$ and $x_{i+1},$ and let $\bar{P}(t)$ be such that if $t \in [s_i,s_{i+1}]$ we have
\[
\vert \bar{P}(t) - \tilde{P}
(t)\vert = D_0(\tilde{P}(t),(x_i,x_{i+1})).
\]
Then
\[
\ell_{e^f\cdot D_0}(\tilde{P}\vert_{[s_i,s_{i+1}]}) = \int_{s_i}^{s_{i+1}} e^{f(\tilde{P}(t))} \vert \tilde{P}'(t)\vert dt \geq \int_{s_i}^{s_{i+1}} e^{f(\bar{P}(t))} \vert \bar{P}'(t)\vert,
\]
Then for small enough $\eta,\bar{\eta}$
\[
\ell_{e^f\cdot D_0}(\tilde{P}\vert_{[s_i,s_{i+1}]}) \geq \ell_{e^f\cdot D_0}(\bar{P}\vert_{[s_i,s_{i+1}]})-\frac{\e}{20\vert G\vert} \geq d(x_i,x_{i+1})-\frac{\e}{20\vert G\vert}.
\]
Noting that
\[
\ell_{e^f\cdot D_0}(\bar{P}) = \sum_{i=1}^{n-1} \ell_{e^f\cdot D_0}(\bar{P}\vert_{[s_i,s_{i+1}]})
\]
we obtain Lemma \ref{refinedloweredge}.
\end{proof}

Now we will prove Lemma \ref{loweredge}.
\begin{proof}[Proof of Lemma \ref{loweredge}]
By the triangle inequality, we have that
\begin{equation}\label{triangle3}
\bar{D}(x,y) \leq \bar{D}(x,x_1)+\bar{D}(x_n,y) + \sum_{i=1}^{n-1} \bar{D}(x_i,x_{i+1}).
\end{equation}
Note that $\mathrm{Diam}_{d}(T_e^\theta)$ tends to $\mathrm{Diam}_d(e)$ as $\theta \to 0.$ Letting $\delta$ be small enough we obtain that since $x\in T_{(x_1,x_2)}^{\eta+\bar{\eta}},$ if $\eta$ and $\bar{\eta}$ are small enough we know that
\[
\bar{D}(x,x_1) \leq \frac{\e}{20},
\]
and similarly
\[
\bar{D}(y,x_{n-1}) \leq \frac{\e}{20}.
\]
Combining this with \eqref{triangle3} we obtain that
\[
\bar{D}(x,y) \leq \sum_{i=1}^{n-1} \bar{D}(x_i,x_{i+1}) + \frac{\e}{10}.
\]
Similarly we obtain by the triangle inequality that
\begin{eqnarray*}
e^f \cdot D_0(x,y)&\geq& e^f\cdot D_0(x_1,x_{n-1}) - e^f\cdot D_0(x,x_1) - e^f\cdot D_0(y,x_{n-1})\\
&\geq& e^f\cdot D_0(x_1,x_{n-1}) -\frac{\e}{10}.
\end{eqnarray*}
Combining this with Lemma \ref{refinedloweredge} we conclude the proof of Lemma \ref{loweredge}.
\end{proof}

\subsection{Proof of Theorem \ref{app}}\label{finalappproof}
In this section we prove Theorem \ref{app}. We will need the following lemma.
\begin{lemma}\label{triptohighway}
Suppose that $x \in [-R,R]^d.$ Then there exists a vertex $\bar{x} \in \mathcal{V}\mathcal{G}$ such that
\[
\bar{D}(x,\bar{x}),e^f \cdot D_0(x,\bar{x}) \leq \frac{\e}{16}
\]
\end{lemma}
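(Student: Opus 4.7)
My plan is to construct the desired $\bar x$ as a nearby vertex of $G$, reached from $x$ by a short path in both metrics that routes through the edge set $E$. First, let $z \in E$ minimize $D_0(x,z)$ (such a minimum exists because $E$ is closed and bounded), let $e$ be an edge of $G$ containing $z$, and take $\bar x$ to be whichever endpoint of $e$ lies closer to $z$ along $e$. Because $\mathcal{V}G$ contains the grid $\frac{R}{\bar n \bar m}\Z^d \cap [-R,R]^d$ and the edges of $G$ are straight segments contained in small $D_0$-neighborhoods of the sub-cubes, both $D_0(x,z)$ and $\ell_{D_0}(e)$ are of order $R/(\bar n \bar m)$, so $D_0(x,\bar x) = O(R/(\bar n \bar m))$.

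The $\bar D$ bound follows immediately from the uniform continuity of $\bar D$ with respect to $D_0$: by \eqref{vphidefdef}, $\bar D(x,\bar x) \leq \vphi(D_0(x,\bar x))$, and $\vphi(\ell) \to 0$ as $\ell \to 0$ since the two metrics generate the same topology on the compact box $[-R,R]^d$. Taking $\bar n \bar m$ large enough makes $\bar D(x,\bar x) \leq \e/16$.

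For the $e^f \cdot D_0$ bound, I will exhibit the explicit path $P$ from $x$ to $\bar x$ obtained by concatenating the $D_0$-straight segment from $x$ to $z$ with the arc of $e$ from $z$ to $\bar x$, and estimate $\ell_{e^f\cdot D_0}(P)$ by splitting according to the three regions that appear in the definition of $F^{\mathrm{ext}}_{\eta,\bar\eta}$. On the interior portion along $e$, we have $f = f_{e,\eta} \leq \log(w_e/\ell_{D_0}(e))$, so the contribution is bounded by $w_e \leq \e/128$ using \eqref{smallwe}. On the transition shell $E_{\eta+\bar\eta} \setminus E_{\eta-\bar\eta}$, whose $D_0$-width is $O(\bar\eta)$, we have $f \leq C_0$ (plus at most a bounded number of $f_{e',\eta}$ summands from the endpoint), so the contribution is at most $e^{C_0}\cdot O(\bar\eta) = O(\e/128)$ by \eqref{param2}.

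The main obstacle is the exterior portion of $P$ lying in $\R^d \setminus E_{\eta+\bar\eta}$, where $f = C_1$: since $e^{C_1} = \e\bar n \bar m/64$ and the $D_0$-length of this portion can be as large as $D_0(x,E) = O(R/(\bar n \bar m))$, the naive product is $O(\e R)$, which is insufficient. To handle this I plan to take $\eta \geq \sqrt{d} R/(2\bar n \bar m)$, which forces every $x \in [-R,R]^d$ to lie in $E_\eta$ and hence the exterior contribution to be vacuous. This is compatible with the upper bound on $\eta$ in \eqref{etasmol} provided $\bar n \bar m$ is taken large enough, using that the edges of $G_4$ are straight segments for which a uniform lower bound on $\ell_{D_0}(e)/w_e$ is available via \eqref{vphidefdef}. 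With this choice, $P$ never leaves $E_{\eta+\bar\eta}$, and summing the interior and transition bounds gives $e^f \cdot D_0(x,\bar x) \leq \ell_{e^f \cdot D_0}(P) \leq \e/16$, as required.
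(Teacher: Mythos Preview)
Your decomposition into the $\bar D$ bound and the three-region estimate for $e^f\cdot D_0$ mirrors the paper's own three-step proof, and the treatment of the interior and shell portions is essentially the same as the paper's Steps~1 and~2. The genuine gap is in the handling of the exterior region.

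Your proposed fix --- forcing $\eta \geq \sqrt{d}\,R/(2\bar n\bar m)$ so that $[-R,R]^d\subset E_\eta$ and the exterior case is vacuous --- is incompatible with the construction of $f$. The function $f=\sum_e f_{e,\eta}+F^{\mathrm{ext}}_{\eta,\bar\eta}$ only behaves as intended when $\eta$ is small enough that the $\eta$-tubes around distinct edges overlap only near shared endpoints; this is what makes $f\equiv f_{e,\eta}$ along the interior of $e$ and underlies Lemmas~\ref{graphapproxupperbound}, \ref{graphapproxlowerbound}, and~\ref{trapped}. If $E_\eta$ swallows all of $[-R,R]^d$, the supports of the $f_{e,\eta}$ overlap everywhere, $f$ becomes an uncontrolled sum of many logarithmic terms at each point, and in particular your own bound ``$f=f_{e,\eta}\leq\log(w_e/\ell_{D_0}(e))$ on the interior portion'' fails. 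Your compatibility argument with \eqref{etasmol} is also unjustified: from \eqref{vphidefdef} one only gets $\ell_{D_0}(e)/w_e\geq \ell_{D_0}(e)/\vphi(\ell_{D_0}(e))$, and for a general length metric $\bar D$ the ratio $\vphi(\ell)/\ell$ need not stay bounded as $\ell\to 0$, so no uniform lower bound on $\ell_{D_0}(e)/w_e$ is available as you refine the graph; the order of parameter choices becomes circular.

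The paper does not try to avoid the exterior case; it handles it directly (Step~3) using exactly the ``naive'' estimate you discarded. The whole point of the choice \eqref{param3} is that $e^{C_1}=\e\bar n\bar m/64$, while the $D_0$-length of the exterior segment from $x$ to $E_{\eta+\bar\eta}$ is at most the grid spacing $O(R/(\bar n\bar m))$, so the product is $O(\e)$ up to a harmless factor depending only on $R$ and $d$ (which can be absorbed by shrinking $\e$ at the outset). In other words, $C_1$ is calibrated precisely so that the exterior contribution is small; you should use it rather than try to modify $\eta$.
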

We assume Lemma \ref{triptohighway} for now, and prove Theorem \ref{app}.
\begin{proof}[Proof of Theorem \ref{app}]
Let $x,y \in [-R,R]^d.$ Let $\bar{x},\bar{y}$ be as in Lemma \ref{triptohighway}. Then
\begin{eqnarray*}
\bar{D}(x,y) & \leq & \bar{D}(x,\bar{x})+\bar{D}(y,\bar{y}) + \bar{D}(\bar{x},\bar{y})\\
&\leq & \bar{D}(\bar{x},\bar{y}) + \frac{\e}{8}.
\end{eqnarray*}
By Lemma \ref{loweredge} we have
\[
\bar{D}(\bar{x},\bar{y})\lesssim e^f\cdot D_0(\bar{x},\bar{y})+ \frac{\e}{4}.
\]
Hence
\begin{eqnarray*}
\bar{D}(x,y) & \leq & e^f\cdot D_0(\bar{x},\bar{y})+\frac{\e}{4}+\frac{\e}{8}\\
&\leq & e^f\cdot D_0(x,y) + e^f\cdot D_0(x,\bar{x})+e^f\cdot D_0(y,\bar{y}) + \frac{\e}{4}+\frac{\e}{8} \\
&\leq & e^f\cdot D_0(x,y) + \frac{\e}{2}.
\end{eqnarray*}
On the other hand, we have
\begin{eqnarray*}
e^f\cdot D_0(x,y) &\leq& e^f\cdot D_0(\bar{x},\bar{y}) + e^f\cdot D_0(x,\bar{x})+e^f\cdot D_0(y,\bar{y})\\
&\leq& e^f\cdot D_0(\bar{x},\bar{y}) + \frac{\e}{4}\\
&\leq & d_G(\bar{x},\bar{y})+\frac{\e}{4}
\end{eqnarray*}
where in the last line we used Lemma \ref{upperedge}. Now by \eqref{dGdclose} we obtain
\[
e^f\cdot D_0(x,y) \leq d_G(\bar{x},\bar{y})+\frac{\e}{4} \leq \bar{D}(\bar{x},\bar{y}) + \frac{\e}{2}.
\]
This completes the proof.
\end{proof}
Now we will prove Lemma \ref{triptohighway}.
\begin{proof}[Proof of Lemma \ref{triptohighway}.]
We will divide the proof in three steps.\\
\underline{Step 1:} Suppose there exists an edge $e$ such that $D_0(x,e)\leq \eta-\bar{\eta}.$ Then there exists a vertex $\bar{x}$ that is one of $e$'s endpoints such that $\bar{D}(x,\bar{x}),e^f \cdot D_0(x,\bar{x}) \leq \frac{\e}{32}.$ Then choosing $\tau$ to be small enough and $K$ large enough, we obtain
\[
\bar{D}(x,y_0) \leq \bar{D}(y_0,y_1)+\frac{\e}{64} \leq \frac{\e}{64}+\frac{\e}{64} = \frac{\e}{32}.
\]
Similarly, we have
\[
e^f\cdot D_0(x,y_0) \leq \frac{w_e}{\ell_{D_0}(e)} D_0(x,y_0) \leq D_0(x,y_0) \leq \frac{\e}{32}.
\]
This completes step 1.\\
\underline{Step 2:} Suppose there is an edge $e$ such that $D_0(x,e)\leq \eta+\bar{\eta}.$ Then there exists a vertex $\bar{x}$ that is one of $e$'s endpoints such that $\bar{D}(x,\bar{x}),e^f \cdot D_0(x,\bar{x}) \leq \frac{3\e}{64}.$ Note that by step 1, it suffices to show that there exists a point $\bar{x}_1 \in E_{\eta-\bar{\eta}}$ such that
\[
e^f\cdot D_0(x,\bar{x}_1), \bar{D}(x,\bar{x}_1) \leq \frac{3\e}{64}.
\]
For this, let $\bar{x}_1$ be the point on $E_{\eta-\bar{\eta}}$ such that $D_0(x,\bar{x}_1)$ is minimized. Then by \eqref{param2},
\[
e^f \cdot D_0(x,\bar{x}_1) \leq e^{C_0} 2\bar{\eta} = \frac{\e}{64}.
\]
Additionally, note that for small enough $\eta,\bar{\eta},$ we have $\bar{D}(x,\bar{x}_1) \leq \frac{\e}{64}$ by continuity of the metric $\bar{D}$ with respect to $D_0.$ This completes this case.\\
\underline{Step 3:} Conclusion of the proof. By steps 1 and 2, the result holds if $x \in E_{\eta+\bar{\eta}},$ thus we suppose that $x \in \R^d \setminus E_{\eta+\bar{\eta}}.$ Let $i_1, \ldots i_d$ be the unique indices such that $x \in S_{i_1\cdots i_d},$ and let $e$ be an edge contained in $S_{i_1\cdots i_d}$ in the same connected component of $S_{i_1\cdots i_d} \setminus E_{\eta+\bar{\eta}}.$ Recall by \eqref{smalldiam}, if we let $\bar{x}$ be an endpoint of $e,$ then $\bar{D}(x,\bar{x}) \leq \frac{\e}{16}.$ On the other hand, to show that $e^f\cdot D_0(x,\bar{x}) \leq \frac{\e}{16},$ by step 2 it suffices to show that there exists an $\bar{x}_2\in \{z: D_0(z,e)\leq \eta+\bar{\eta}\}$ such that $e^f\cdot D_0(x,\bar{x}_2) \leq \frac{\e}{64}.$ To show this, we take $\bar{x}_2$ to be the point such that $D_0(x,\bar{x}_2)$ is minimized, with the constraint $D_0(\bar{x}_2,e)\leq \eta+\bar{\eta}.$ Then by definition of $f,$ we have by \eqref{param3}
\[
e^f\cdot D_0(x,\bar{x}_2) \leq \frac{1}{\bar{n}\bar{m}} e^{C_1} = \frac{\e}{64}.
\]
This completes the proof.
\end{proof}

\subsection{Proofs of Lemmas \ref{graphapproxupperbound}, \ref{graphapproxlowerbound} and \ref{trapped}}\label{appproofs}
We will now prove Lemmas \ref{graphapproxupperbound}, \ref{graphapproxlowerbound} and \ref{trapped}. We start with the proof of Lemma \ref{graphapproxupperbound}.
\begin{proof}[Proof of Lemma \ref{graphapproxupperbound}] Note that if $v_1,v_2$ are adjacent edges, then if $P$ denotes the straight line segment between $v_1,v_2$ we have
\[
e^f\cdot D_0(v_1,v_2) \leq \ell_{e^f\cdot D_0}(P) \leq w_{(v_1,v_2)} = d_G(v_1,v_2) = d(v_1,v_2)
\]
by construction of $G.$ This completes the proof.
\end{proof}
Now we will prove Lemma \ref{trapped}.

\begin{proof}[Proof of Lemma \ref{trapped}]
Suppose that $x,y \in E_\eta,$ and $P$ is a $e^f\cdot D_0$-distance minimizing path between them with $P(0)=x,$ $P(1)=y.$ The idea of the proof is that if $P$ exits $E_{\eta+\frac{\bar{\eta}}{2}}$ then $P$ can be replaced by a path with smaller $e^f\cdot D_0$-length, which would yield a contradiction.

Suppose that $P$ exits $E_{\eta+\frac{\bar{\eta}}{2}}.$  Suppose that $t_{\mathrm{start}},t_{\mathrm{end}}$ are chosen so that
\[
P([t_{\mathrm{start}},t_{\mathrm{end}}]) \subset \R^d\setminus E_\eta, \; P(t_{\mathrm{start}}),P(t_{\mathrm{end}}) \in \p E_\eta, \; P([t_{\mathrm{start}},t_{\mathrm{end}}]) \cap \R^d \setminus E_{\eta+\frac{\bar{\eta}}{2}} \neq \varnothing.
\]
That is, $t_{\mathrm{start}},t_{\mathrm{end}}$ are the entrance and exit times of an excursion, chosen so that $P$ stays outside $E_\eta$ between these two times. Let
\[
\ell = \ell_{D_0}(P\vert_{[t_{\mathrm{start}},t_{\mathrm{end}}]}).
\]
Then we claim that we can define a path $\tilde{P}:[0,1] \to \R^d$ such that $\ell_{e^f\cdot D_0} (\tilde{P}) < \ell_{e^f\cdot D_0}(P\vert_{[t_{\mathrm{start}},t_{\mathrm{end}}]})$ and $\tilde{P}(0)=P(t_{\mathrm{start}}), \tilde{P}(1)=P(t_{\mathrm{end}}).$ Note that this would yield a contradiction, since the concatenation $P\vert_{[0,t_{\mathrm{start}}]} \ast \tilde{P}\ast P\vert_{[t_{\mathrm{end}},1]}$ would have a smaller $e^f\cdot D_0$ length than $P,$ which is impossible since $P$ is an $e^f\cdot D_0$-distance minimizing path.

To define $\tilde{P},$ recall that $P(t_{\mathrm{start}}),P(t_{\mathrm{end}}) \in \p E_\eta,$ and so there exist points (not necessarily vertices) $\bar{v}_{\mathrm{start}},\bar{v}_{\mathrm{end}} \in E$ such that
\begin{equation}\label{eqidk}
e^f\cdot D_0(P(t_{\mathrm{start}}),\bar{v}_{\mathrm{start}}),e^f\cdot D_0(P(t_{\mathrm{end}}),\bar{v}_{\mathrm{end}}) \leq \eta e^{\norm{f}_{L^\infty(E_\eta)}}.
\end{equation}
Note that this is possible, since the Euclidean distance between any point in $\p E_\eta$ and $E$ is at most $\eta.$ Let $v_{\mathrm{start}},v_{\mathrm{end}}$ denote the vertices of $G$ nearest to $\bar{v}_{\mathrm{start}},\bar{v}_{\mathrm{end}}$ respectively with respect to the Euclidean metric. Then we have by Lemma \ref{upperedge} that
\[
e^f\cdot D_0(\bar{v}_{\mathrm{start}},\bar{v}_{\mathrm{end}}) \leq \max_{e\in \mathcal{E}G} \ell_{e^f\cdot D_0}(e) + e^f\cdot D_0(v_{\mathrm{start}},v_{\mathrm{end}}) \leq \max_{e\in \mathcal{E}G} \ell_{e^f\cdot D_0}(e)+\bar{D}(v_{\mathrm{start}},v_{\mathrm{end}}).
\]
We are now ready to define $\tilde{P}.$ Let $\tilde{P}_{\mathrm{start}}$ be an $e^f\cdot D_0$-distance minimizing path between $P(t_{\mathrm{start}}),\bar{v}_{\mathrm{start}},$ and similarly let $\tilde{P}_{\mathrm{end}
}$ be an $e^f\cdot D_0$-distance minimizing path between $\bar{v}_{\mathrm{end}},P(t_{\mathrm{end}}).$ Let $\tilde{P}_{\mathrm{vstart}}$ denote the shortest path contained in $E$ between $\bar{v}_{\mathrm{start}}$ and $v_{\mathrm{start}},$ and $\tilde{P}_{\mathrm{vend}}$ the shortest path contained in $E$ between $v_{\mathrm{end}}$ and $\bar{v}_{\mathrm{end}}.$ Finally, we let $\tilde{P}_{\mathrm{inner}}$ denote the $d_G$-distance minimizing path between $v_{\mathrm{start}},v_{\mathrm{end}}.$ We define $\tilde{P}$ to be the concatenation
\[
\tilde{P} := \tilde{P}_{\mathrm{start}} \ast \tilde{P}_{\mathrm{vstart}} \ast \tilde{P}_{\mathrm{inner}} \ast \tilde{P}_{\mathrm{vend}} \ast \tilde{P}_{\mathrm{end}}.
\]
Note that by \eqref{eqidk} we have that
\[
\ell_{e^f\cdot D_0}(\tilde{P}_{\mathrm{start}}),\ell_{e^f\cdot D_0}(\tilde{P}_{\mathrm{end}}) \leq \eta e^{\norm{f}_{L^\infty(E_\eta)}}.
\]
Also,
\[
\ell_{e^f\cdot D_0}(\tilde{P}_{\mathrm{vstart}}), \ell_{e^f\cdot D_0}(\tilde{P}_{\mathrm{vend}}) \leq \frac{1}{2}\max_{e \;\mathrm{edge}}\ell_{e^f\cdot D_0}(e) \leq \frac{1}{2}\max_{e \;\mathrm{edge}} \ell_{D_0}(e) \frac{w_e}{\ell_{D_0}(e)} =\frac{1}{2}\max_{e \; \mathrm{edge}} w_e.
\]
Then we compute
\begin{eqnarray*}
\ell_{e^f\cdot D_0}(\tilde{P}) &\leq& \ell_{e^f\cdot D_0}(\tilde{P}_{\mathrm{start}}) + \ell_{e^f\cdot D_0}(\tilde{P}_{\mathrm{vstart}}) + \ell_{e^f\cdot D_0}(\tilde{P}_{\mathrm{inner}}) + \ell_{e^f\cdot D_0}(\tilde{P}_{\mathrm{vend}}) + \ell_{e^f\cdot D_0}(\tilde{P}_{\mathrm{end}})\\
&\leq & 2\eta e^{\norm{f}_{L^\infty(E_\eta)}} + \ell_{e^f\cdot D_0}(\tilde{P}_{\mathrm{vstart}}) + \ell_{e^f\cdot D_0}(\tilde{P}_{\mathrm{inner}}) + \ell_{e^f\cdot D_0}(\tilde{P}_{\mathrm{vend}})\\
&\leq & 2\eta e^{\norm{f}_{L^\infty(E_\eta)}} + \max_{e \;\mathrm{edge}} w_e + \ell_{e^f\cdot D_0}(\tilde{P}_{\mathrm{inner}})\\
&\leq & 2\eta e^{\norm{f}_{L^\infty(E_\eta)}} + \max_{e \;\mathrm{edge}} w_e + \vphi(\vert v_{\mathrm{start}}-v_{\mathrm{end}} \vert)
\end{eqnarray*}
where in the last line we used that
\[
\ell_{e^f\cdot D_0}(\tilde{P}_{\mathrm{inner}}) \leq \vphi(\vert v_{\mathrm{start}}-v_{\mathrm{end}} \vert)
\]
where $\vphi$ is as defined in \eqref{vphidefdef} which follows from Lemma \ref{graphapproxupperbound}. By defintion of $f,$ we have
\[
\norm{f}_{L^\infty(E_\eta)} \leq \max_{e \; \mathrm{edge}}\log\left(\frac{2w_e}{\ell_{D_0}(e)}\right).
\]
Now combining this with \eqref{etasmol}, \eqref{fdef}, \eqref{smallwe} we have that
\[
2\eta e^{\norm{f}_{L^\infty(E_\eta)}} + \max_{e \;\mathrm{edge}} w_e  \leq \max_{e \;\mathrm{edge}}w_e + \frac{\e}{128} < \frac{\e}{64} = 2e^{C_0}\bar{\eta}
\]
where we used \eqref{param2} in the last inequality. Now note that if $\ell \geq \Psi(\e),$ then
\[
\vphi(\vert v_{\mathrm{start}}-v_{\mathrm{end}} \vert) \leq e^{C_1}(\ell-2\bar{\eta})
\]
since
\[
e^{C_1} = \frac{\e\bar{n}\bar{m}}{64} \geq \sup_{\Psi(\e) \leq \ell \leq 2R} \frac{\vphi(\ell)}{\ell-2\bar{\eta}}
\]
by \eqref{param1} and \eqref{param3}. Therefore we obtain
\begin{equation}\label{firsthand}
\ell_{e^f\cdot D_0}(\tilde{P}) < 2e^{C_0}\bar{\eta} + e^{C_1}(\ell-2\bar{\eta}).
\end{equation}
On the other hand,
\begin{eqnarray}\label{contradiction}
\ell_{e^f\cdot D_0}(P\vert_{[t_{\mathrm{start}},t_{\mathrm{end}}]}) &\geq&  e^{C_0}\ell_{D_0}(P\vert_{[t_{\mathrm{start}},t_{\mathrm{end}}]}\cap E_{\eta+\frac{\bar{\eta}}{2}}) + e^{C_1}\left(\ell-\ell_{D_0}(P\vert_{[t_{\mathrm{start}},t_{\mathrm{end}}]}\cap E_{\eta+\frac{\bar{\eta}}{2}})\right)\nonumber\\
&\geq& \left(e^{C_0}-e^{C_1}\right) \ell_{D_0}(P\vert_{[t_{\mathrm{start}},t_{\mathrm{end}}]}\cap E_{\eta+\frac{\bar{\eta}}{2}}) + e^{C_1}\ell \nonumber\\
&\geq & \left(e^{C_0}-e^{C_1}\right)\bar{\eta} + e^{C_1}\ell \nonumber\\
&=& e^{C_0} \bar{\eta}+ e^{C_1}(\ell-\bar{\eta}).
\end{eqnarray}
\begin{comment}
Indeed, letting $\tilde{t}_{\mathrm{start}}, \tilde{t}_{\mathrm{end}}$ be the $E_{\eta+\frac{\bar{\eta}}{2}}$ exit and entrance times respectively, we have that
\[
\ell_{e^f\cdot D_0}(P\vert_{[t_{\mathrm{start}},\tilde{t}_{\mathrm{start}}]}),\ell_{e^f\cdot D_0}(P\vert_{[\tilde{t}_{\mathrm{end}},t_{\mathrm{end}}]}) \geq e^{C_0} \min\left\{\ell_{D_0}(P\vert_{[t_{\mathrm{start}},\tilde{t}_{\mathrm{start}}]}),\ell_{D_0}(P\vert_{[\tilde{t}_{\mathrm{end}},t_{\mathrm{end}}]})\right\} \geq e^{C_0}\bar{\eta},
\]
while
\[
\ell_{e^f\cdot D_0}(P\vert_{[\tilde{t}_{\mathrm{start}},\tilde{t}_{\mathrm{end}}]}) \geq e^{C_1} \ell_{D_0}(P\vert_{[\tilde{t}_{\mathrm{start}},\tilde{t}_{\mathrm{end}}]}) \geq e^{C_1}(\ell-2\bar{\eta}).
\]
This proves \eqref{contradiction}.
\end{comment}
Now combining \eqref{firsthand} and \eqref{contradiction} we obtain a contradiction.
Now in the case that $\ell \leq \Psi(\e),$ we have that $\vphi(\vert v_{\mathrm{start}}-v_{\mathrm{end}}\vert) \leq \e$ and we again obtain a contradiction. This completes the proof.

\begin{comment}
Then using that $\bar{D}(x,y) \geq \frac{\e}{4}$ and letting $\ell = \ell_{D_0}(P),$
\[
\ell_{e^f\cdot D_0}(P) \geq e^{C_1} \ell = \e\bar{n}\bar{m}\ell \geq \vphi(\ell) > e^f\cdot D_0(x,y),
\]
which contradicts the fact that $P$ is $e^f\cdot D_0$-length minimizing.Thus $P([0,1]) \subset E_{\eta+\bar{\eta}}.$ This completes the proof.
\end{comment}
\end{proof}

Finally, we prove prove Lemma \ref{graphapproxlowerbound}.
\begin{proof}[Proof of Lemma \ref{graphapproxlowerbound}]
Suppose that $p:[0,1]\to \R^d$ is a $e^f\cdot D_0$-distance minimizing path between $v_1,v_2.$ Note that by Lemma \ref{trapped}, $p([0,1]) \subseteq (v_1,v_2)+B_{\eta+\bar{\eta}}(0),$ where again we recall that $(v_1,v_2)$ denotes the edge between $v_1,v_2.$ We claim that
\[
\ell_{e^f\cdot D_0}(p) \geq d_G(v_1,v_2)-\frac{\e}{2}.
\]
Let $P$ be the projection of the path $p$ onto the edge $e,$ that is for each $t\in [0,1],$ let $P(t)$ be the unique point such that $\mathrm{Dist}(P(t),p(t))=\inf_{z\in e}\mathrm{Dist}(z,p(t)).$ It is easy to check that $P(t)$ is also continuous. For $x_0 \in \R^d$, let $B_r^d(x_0)$ denote the $d$-ball of radius $r$ centered at $x_0,$ and let $t_0,t_1$ be defined by
\[
t_0=\sup\{t:P(t) \in B_{\tau}^d(v_1)\},
\]
\[
t_1=\inf\{t:P(t)\in B_{\tau}(v_2)\}.
\]
$t_0,t_1$ are used to split off the endpoints of the edge. This way, $f$ is smaller at the projection onto the edge if away from the endpoints. We thus have for small enough $\tau$ that
\[
\ell_{e^f\cdot D_0}(p) = \int_0^1 e^{f(p(t))} \vert p'(t)\vert dt\geq \int_{t_0}^{t_1} e^{f(p(t))} \vert p'(t)\vert dt - 2\tau \geq \int_{t_0}^{t_1} F(P(t)) \vert P'(t)\vert dt \geq \ell_e^{f(P)}-4\tau=w-4\tau.
\]
This completes the proof. 
\end{proof}

\bibliography{ref,cibib}
\bibliographystyle{hmralphaabbrv}

\end{document}